\newtheorem{definition}{Definition}[section]
\newtheorem{lemma}{Lemma}[section]
\newtheorem{theorem}{Theorem}[section]
\newtheorem{proposition}{Proposition}[section]
\newtheorem{remark}{Remark}[section]
\begin{document}

\renewcommand{\baselinestretch}{1.2}
\renewcommand{\arraystretch}{1.0}

\title{\bf Multiplier Hopf coquasigroup: Definition and Coactions}
\author
{
    \textbf{Tao Yang} \footnote{Corresponding author.
        College of Science, Nanjing Agricultural University, Nanjing 210095, China. E-mail: tao.yang@njau.edu.cn}
}
\maketitle

\begin{center}
    \begin{minipage}{12.cm}

        \textbf{Abstract}
        This paper uses Galois maps to give a definition of generalized multiplier Hopf coquasigroups, 
        and give a sufficient and necessary condition for a multiplier bialgebra to be a regular multiplier Hopf coquasigroup.
        Then coactions and Yetter-Drinfeld quasimodules of regular multiplier Hopf coquasigroups are also considered.
        \\

        {\bf Keywords} multiplier Hopf coquasigroup, Galois map, coaction, Yetter-Drinfeld module
        \\

        {\bf Mathematics Subject Classification}   16T05 $\cdot$ 16T99

    \end{minipage}
\end{center}
\normalsize

\section{Introduction}
\def\theequation{\thesection.\arabic{equation}}
\setcounter{equation}{0}

 Multiplier Hopf algebras were introduced by Van Daele in \cite{V94}, generalizing Hopf algebras to a non-unital case. 
 An important feature of this setting is the extension of the dual of finite-dimensional Hopf algebras, within one category \cite{V98}.
 Motivated by multiplier Hopf algebra theory, the paper \cite{Y22} used the non-degenerate faithful integrals to construct the integral dual of an infinite-dimensional Hopf quasigroup, 
 and got the multiplier Hopf coquasigroup structure and bidual theorem.  
 Unlike Hopf coquasigroups in \cite{K,KM}, the underlying algebra of multiplier Hopf coquasigroup $A$ need not have an identity in general,
 and the comultiplication $\Delta$ does not map $A$ into $A \otimes A$ but rather into its multiplier algebra $M(A \otimes A)$.
 Multiplier Hopf coquasigroups weaken the coassociativity of multiplier Hopf algebra, 
 and unify the notions of multiplier Hopf algebras and Hopf quasigroups.
 
 As we know, Galois maps play an important role in the definition of multiplier Hopf algebra. Then there is a natural question:
 Could the definition of multiplier Hopf coquasigroup be restated by Galois maps?
 This is the main motivation, and this paper give an positive answer.

 This paper is organized as follows. 
 In section 2, we introduce some basic notions:  multiplier algebras, completed module and multiplier Hopf coquasigroups, which will be used in the following sections.

 In section 3, starting from the Galois maps, we restate the definition of (regular) multiplier Hopf ($*$-)coqusigroup, and get the equivalent definition of regular multiplier Hopf coqusigroup.

 In section 4, as in multiplier Hopf algebra case \cite{YW11}, we use completed module to define the coactions of multiplier Hopf coquasigroups and Yetter-Drinfeld quasimodules, 
 which could be regarded as a natural generalization of Yetter-Drinfeld modules. The four kinds of Yetter-Drinfeld quasimodule categories are equivalent.

\section{Preliminaries}
\def\theequation{\thesection.\arabic{equation}}
\setcounter{equation}{0}

 Throughout this paper, all linear spaces we considered are over a fixed field $k$ (e.g., the complex number field $\mathds{C}$).

\subsection{Multiplier algebras and completed modules}

 Let $A$ be an (associative) algebra over $k$ , with or without identity, but we do require that
 the product, seen as a bilinear form, is non-degenerated. This means that, whenever $a\in A$ and $ab=0$ for all $b\in A$
 or $ba=0$ for all $b\in A$, we must have that $a=0$.

 Recall from \cite{V94,V98} that $M(A)$ is characterized as the largest algebra with identity containing $A$ as an essential two-sided ideal.
 In particularly, we still have that, whenever $a\in M(A)$ and $ab=0$ for all $b\in A$ or $ba=0$ for all $b\in A$, again $a=0$.
 Furthermore, we consider the tensor algebra $A\otimes A$. It is still non-degenerated and we have its multiplier algebra $M(A\otimes A)$.
 There are natural imbeddings
 $$A\otimes A \subseteq M(A)\otimes M(A) \subseteq M(A\otimes A).$$
 In generally, when $A$ has no identity, these two inclusions are strict.
 If $A$ already has an identity $1_{A}$, the product is obviously non-degenerate
 and $M(A)=A$ and $M(A\otimes A) = A\otimes A$.

 Let $A$ and $B$ be non-degenerate algebras, if homomorphism $f: A\longrightarrow M(B)$ is non-degenerated
 (i.e., $f(A)B=B$ and $Bf(A)=B$),
 then it has a unique extension to a homomorphism $M(A)\longrightarrow M(B)$, we also denote it $f$.
 \\

 Let $X$ be a vector space over $k$. 
 Suppose $X$ is a left $A$-module with the module structure map $\cdot : A\otimes X\longrightarrow X$. 
 We will always assume non-degenerate module actions, that is, $x=0$ if $x\in X$ and $a\cdot x=0$ for all $a\in A$.
 If the module is unital (i.e., $A\cdot X=X$), then we can get an extension of the module structure to $M(A)$, 
 this means that we can define $f\cdot x$, where $f\in M(A)$ and $x\in X$.
 In fact, since $x\in X=A\cdot X$, then $x= \sum_{i} a_{i}\cdot x_{i}$, $f\cdot v=\sum_{i} (fa_{i})\cdot x_{i}$.
 In this setting, we can easily get $1_{M(A)}\cdot x=x$.

 Consider a non-degenerate algebra $A$ and a left $A$-module $X$ such that the module action is non-degenerate. 
 Let us recall from \cite{V08,YW11} the completed (or extended) module of $X$.
 
 Denote by $Y$ the space of linear maps $\rho: A\rightarrow X$ satisfying $\rho(aa')=a\cdot \rho(a')$ for all $a, a'\in A$. 
 Then $Y$ becomes a left $A$-module if we define $a\cdot \rho$ for $a\in A$ and $\rho \in Y$ by $(a\cdot \rho)(a')=\rho(a'a)=a'\cdot \rho(a)$.
 Define $\rho_{x} \in Y$ by $\rho_{x}(a)=a\cdot x$ when $a\in A$. Then $X$ becomes a submodule of $Y$. And we have $A\cdot Y\subseteq X$, and if $A\cdot X=X$ then
 $A\cdot Y=X$. Since $A^{2}=A$, $Y$ is still non-degenerate. If $A$ has a unit, then $Y=X$, in the other case,
 mostly $Y$ is strictly bigger than $X$. We can also do the same as before for right modules as well.

 Let $X$ be a non-degenerate $A$-bimodule. 
 Denotes by $Z$ the space of pair $(\lambda, \rho)$ of linear maps from $A$ to $X$ satisfying $a\cdot \lambda(a')=\rho(a)\cdot a'$ for all $a, a' \in A$. 
 From the non-degeneracy, it follows that $\rho(aa')=a\cdot \rho(a')$ and $\lambda(aa')=\lambda(a)\cdot a'$ for all $a, a' \in A$. 
 Also $\rho$ is completely determined by $\lambda$ and vice versa. 
 We can consider $Z$ as the intersection of two extensions of $X$ (as a left and a right modules). 
 Then $Z$ becomes an $A$-bimodule, if we define $az$ and $za$ for $a\in A$ and $z=(\lambda, \rho) \in Z$ by $az = (a\lambda(\cdot), \rho(\cdot a))$ and $za = (\lambda(a \cdot), \rho(\cdot) a)$. 
 If we define $(\lambda_{x}, \rho_{x})$ for $x\in X$ by $\lambda_{x}(a)=x\cdot a$ and $\rho_{x}(a)=a\cdot x$, we get $X$ as a submodule of $Z$.
 We say that $Z$ is a {\it completed (or extended) module} of $A$, and denote as $M_{0}(X)$.

\subsection{Multiplier Hopf coquasigroups}

A \emph{multiplier Hopf coquasigroup} introduced in \cite{Y22} is a nondegenerate associative algebra $A$ equipped with algebra homomorphisms $\Delta: A\longrightarrow M(A\otimes A)$(coproduct),
$\varepsilon: A\longrightarrow k$ (counit), and a linear map $S: A \longrightarrow A$ (antipode) such that
\begin{enumerate}[nosep, label=(\arabic*)]
    \item  $T_{1}(a\otimes b)=\Delta(a)(1\otimes b)$ and $T_{2}(a\otimes b)=(a\otimes 1)\Delta(b)$ belong to $A\otimes A$ for any $a, b\in A$.
    \item The counit satisfies $(\varepsilon\otimes id)T_{1}(a\otimes b) = ab = (id\otimes \varepsilon)T_{2}(a\otimes b)$.
    \item $S$ is antimultiplicative and anticomultiplicative such that for any $a, b\in A$
          \begin{eqnarray}
              && (m\otimes \iota)(S\otimes \Delta)\Delta(a) = 1_{M(A)}\otimes a = (m\otimes \iota)(\iota \otimes S\otimes \iota)(\iota \otimes \Delta)\Delta(a), \label{2.1}\\
              && (\iota \otimes m)(\Delta \otimes S)\Delta(a) = a \otimes 1_{M(A)}  = (\iota\otimes m)(\iota \otimes S\otimes \iota)(\Delta \otimes \iota)\Delta(a). \label{2.2}
          \end{eqnarray}
\end{enumerate}
Multiplier Hopf coquasigroup $(A, \Delta)$ is called \emph{regular}, if the antipode $S$ is bijective. $T_{1}$ and $T_{2}$ in the above are usually called \emph{Galois maps}.

Multiplier Hopf coquasigroup weakens the coassociativity of multiplier Hopf algebra. And if multiplier Hopf coquasigroup $(A, \Delta)$ is coassociative, then $(A, \Delta)$ is a multiplier Hopf algebra.
From the perspective of Hopf quasigroup and coquasigroup, multiplier Hopf coquasigroup gives an answer to the dual of infinite-dimensional Hopf quasigroup, generalizes Hopf coquasigroup to a non-unital case, and retains many of the properties of Hopf coquasigroup.

 Recall from Definition 2.2 in \cite{B10} a $k$-linear map $\phi$ is \emph{almost left $A$-colinear} if $\phi = (\iota \otimes \varepsilon \otimes \iota) (\iota \otimes \phi) (\Delta \otimes \iota)$,
while $\phi$ is \emph{almost right $A$-colinear} if $\phi = (\iota \otimes \varepsilon \otimes \iota) (\phi \otimes \iota ) (\iota \otimes \Delta)$.
In the following proposition, we show that for a multiplier Hopf coquasigroup $(A, \Delta)$ introduced in \cite{Y22}, the Galois map $T_{2}$ is almost right $A$-colinear, and  the $T_{1}$ is almost left $A$-colinear.
\begin{proposition}
    For a multiplier Hopf coquasigroup $(A, \Delta)$, the Galois map $T_{1}$ (respectively $T_{2}$) is almost left (respectively right) $A$-colinear, and has a left (respectively right) $A$-colinear inverse.
\end{proposition}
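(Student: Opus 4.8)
The plan is to reduce everything to short computations in Sweedler-type leg notation: the colinearity statements use only the counit axiom (2), while the existence of inverses uses the antipode axioms \eqref{2.1}--\eqref{2.2}. As is customary in this subject, each formal manipulation is made rigorous by covering any tensor leg that sits inside a multiplier with an element of $A$, so that one only ever handles honest elements of $A\otimes A$ or $A\otimes A\otimes A$, and then invoking non-degeneracy; this is exactly the bookkeeping of \cite{V94}. First I would prove that $T_1$ is almost left $A$-colinear: for $a,b\in A$, $(\iota\otimes T_1)(\Delta\otimes\iota)(a\otimes b)=\sum a_{(1)}\otimes\Delta(a_{(2)})(1\otimes b)=\sum a_{(1)}\otimes a_{(2)(1)}\otimes a_{(2)(2)}b$, and applying $\iota\otimes\varepsilon\otimes\iota$ collapses the two middle legs through $(\varepsilon\otimes\iota)\Delta=\iota$ --- which is the content of axiom (2), $(\varepsilon\otimes\iota)T_1(c\otimes d)=cd$, together with non-degeneracy --- leaving $\sum a_{(1)}\otimes a_{(2)}b=\Delta(a)(1\otimes b)=T_1(a\otimes b)$. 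The mirror computation, using $(\iota\otimes\varepsilon)\Delta=\iota$, shows $T_2=(\iota\otimes\varepsilon\otimes\iota)(T_2\otimes\iota)(\iota\otimes\Delta)$, i.e.\ $T_2$ is almost right $A$-colinear. Only the counit enters here; genuine colinearity would require coassociativity, which a multiplier Hopf coquasigroup lacks, so ``almost'' colinearity is the sharp statement.

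Next I would exhibit the inverses. Put $R_1(a\otimes b):=(\iota\otimes m)(\iota\otimes S\otimes\iota)(\Delta(a)\otimes b)$ and $R_2(a\otimes b):=(m\otimes\iota)(\iota\otimes S\otimes\iota)(a\otimes\Delta(b))$, that is, informally, $R_1(a\otimes b)=\sum a_{(1)}\otimes S(a_{(2)})b$ and $R_2(a\otimes b)=\sum aS(b_{(1)})\otimes b_{(2)}$. The two equalities contained in \eqref{2.2}, namely $\sum a_{(1)(1)}\otimes a_{(1)(2)}S(a_{(2)})=a\otimes 1_{M(A)}$ and $\sum a_{(1)(1)}\otimes S(a_{(1)(2)})a_{(2)}=a\otimes 1_{M(A)}$, yield $T_1R_1=\iota_{A\otimes A}$ and $R_1T_1=\iota_{A\otimes A}$ respectively (after stripping the trailing factor $1\otimes b$ and using $1_{M(A)}\cdot b=b$); symmetrically, the two equalities in \eqref{2.1} give $T_2R_2=R_2T_2=\iota_{A\otimes A}$. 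Hence $T_1,T_2$ are bijective with $T_1^{-1}=R_1$, $T_2^{-1}=R_2$. Finally, running the colinearity computation of the first paragraph verbatim with $R_1$ (resp.\ $R_2$) in place of $T_1$ (resp.\ $T_2$) --- the extra $S$ on one leg is untouched by the $\varepsilon\otimes\iota$ collapse --- shows $R_1$ is almost left $A$-colinear and $R_2$ almost right $A$-colinear, which is the asserted colinearity of the inverses.

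The main obstacle is not any of these identities but the \emph{well-definedness} of $R_1$ and $R_2$ as linear maps $A\otimes A\to A\otimes A$: since $\Delta(a)\notin A\otimes A$ in general, and $S$ need not extend to $M(A)$ when it is not bijective, the defining expressions a priori only produce elements of a multiplier space. I would settle this by the standard covering techniques: for $R_1$, write $(c\otimes 1)\Delta(a)=T_2(c\otimes a)\in A\otimes A$, apply the $k$-linear map $x\mapsto S(x)b$ to the second leg at that level to get $(c\otimes 1)R_1(a\otimes b)=\sum ca_{(1)}\otimes S(a_{(2)})b\in A\otimes A$ for every $c\in A$, and descend by non-degeneracy; for $R_2$ one covers the second leg of $\Delta(b)$ symmetrically through $T_1$. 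This is carried out exactly as $T_1^{-1}$ is shown to be well-defined in the multiplier Hopf algebra theory of \cite{V94}, and it is the only place where the argument genuinely exploits the multiplier framework rather than the unital Hopf-coquasigroup manipulations of \cite{K,KM}; once it is in hand, everything above is routine bookkeeping.
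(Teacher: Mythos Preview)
Your proposal is correct and follows essentially the same route as the paper: both verify the almost colinearity of $T_1$ (and of $T_1^{-1}$) by a Sweedler computation that collapses via the counit axiom, and both exhibit $T_1^{-1}(a\otimes b)=a_{(1)}\otimes S(a_{(2)})b$ (resp.\ $T_2^{-1}(a\otimes b)=aS(b_{(1)})\otimes b_{(2)}$) and check the inverse identities using \eqref{2.2} (resp.\ \eqref{2.1}). Your extra paragraph on well-definedness of $R_1,R_2$ via covering is a point the paper takes for granted by reference to \cite{Y22}, but otherwise the arguments coincide.
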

\begin{proof}
    We only check $T_{1}$ here and $T_{2}$ is similar.
    For any $c\in A$,
    \begin{eqnarray*}
        && (c\otimes 1)(\iota \otimes \varepsilon \otimes \iota) (\iota \otimes T_{1}) (\Delta \otimes \iota)(a\otimes b) \\
        &=& (\iota \otimes \varepsilon \otimes \iota) (\iota \otimes T_{1}) \big((c\otimes 1\otimes 1)(\Delta \otimes \iota)(a\otimes b)\big) \\
        &=& (\iota \otimes \varepsilon \otimes \iota) (\iota \otimes T_{1}) (ca_{(1)}\otimes a_{(2)}\otimes b) \\
        &=& (\iota \otimes \varepsilon \otimes \iota) (ca_{(1)}\otimes a_{(2)(1)}\otimes a_{(2)(2)}b) \\
        &=& ca_{(1)}\otimes \varepsilon (a_{(2)(1)}) a_{(2)(2)}b \\
        &=& ca_{(1)}\otimes a_{(2)}b \\
        &=& (c\otimes 1) T_{1} (a\otimes b).
    \end{eqnarray*}
    Therefore $T_{1} = (\iota \otimes \varepsilon \otimes \iota) (\iota \otimes T_{1}) (\Delta \otimes \iota)$, i.e., almost left $A$-colinearity holds.

    As in \cite{Y22} $T_{1}$ is invertible and $T_{1}^{-1} (a\otimes b) = a_{(1)}\otimes S(a_{(2)})b$. Indeed,
   \begin{eqnarray*}
        && T_{1}\big(T_{1}^{-1} (a\otimes b)\big) = T_{1}\big( a_{(1)}\otimes S(a_{(2)})b \big)
        = a_{(1)(1)}\otimes a_{(1)(2)} S(a_{(2)})b \stackrel{(\ref{2.2})}{=} a\otimes b, \\
        && T_{1}^{-1}\big(T_{1} (a\otimes b)\big) = T_{1}^{-1}\big( a_{(1)}\otimes a_{(2)}b \big)
        = a_{(1)(1)}\otimes S(a_{(1)(2)}) a_{(2)} b \stackrel{(\ref{2.2})}{=} a\otimes b.
    \end{eqnarray*} 
     Next we check $T_{1}^{-1}$ is also left $A$-colinear. Indeed, for $\forall c\in A$,
    \begin{eqnarray*}
        && (c\otimes 1)(\iota \otimes \varepsilon \otimes \iota) (\iota \otimes T_{1}^{-1}) (\Delta \otimes \iota)(a\otimes b) \\
        &=& (\iota \otimes \varepsilon \otimes \iota) (\iota \otimes T_{1}^{-1}) \big((c\otimes 1\otimes 1)(\Delta \otimes \iota)(a\otimes b)\big) \\
        &=& (\iota \otimes \varepsilon \otimes \iota) (\iota \otimes T_{1}^{-1}) (ca_{(1)}\otimes a_{(2)}\otimes b) \\
        &=& (\iota \otimes \varepsilon \otimes \iota) (ca_{(1)}\otimes a_{(2)(1)}\otimes S(a_{(2)(2)})b ) \\
        &=& ca_{(1)}\otimes S(\varepsilon (a_{(2)(1)}) a_{(2)(2)}) b \\
        &=& ca_{(1)}\otimes S(a_{(2)})b \\
        &=& (c\otimes 1) T_{1}^{-1} (a\otimes b).
    \end{eqnarray*}
    Thus, almost left $A$-colinearity of  $T_{1}^{-1}$ holds.
\end{proof}

\section{Galois maps of multiplier Hopf coquasigroups}
\def\theequation{\thesection.\arabic{equation}}
\setcounter{equation}{0}

As we know, Galois maps play a key role in the definition of multiplier Hopf algebra. 
In this section, we try to construct antipode from the Galois maps following the strategy of multiplier Hopf algebra,
and give an equivalent definition of regular multiplier Hopf coquasigroup.

Let $(A, m)$ be a nondegenerate associative algebra with its multiplier algebra $M(A)$. For the sake of narrative convenience, let's first introduce concept of comultiplication.

\begin{definition}
    Let $A$ be an nondegenerate associative algebra, a \emph{comultiplication} on $A$ is an algebra homomorphisms $\Delta: A\longrightarrow M(A\otimes A)$ such that $T_{1}(a\otimes b)=\Delta(a)(1\otimes b)$ and $T_{2}(a\otimes b)=(a\otimes 1)\Delta(b)$ belong to $A\otimes A$ for all $a, b\in A$.
   
    A \emph{counit} $\varepsilon$ on $A$ is a  $k$-linear map $\varepsilon: A\longrightarrow k$ such that 
    \begin{eqnarray}
   (\varepsilon \otimes \iota)T_{1}(a\otimes b)  = ab, \qquad
    (\iota \otimes \varepsilon)T_{2}(a\otimes b)  = ab. \label{3.1}
    \end{eqnarray}
\end{definition}

\begin{remark}
 (1) The comultiplication $\Delta$ is not necessarily \emph{coassociative} in the sense that 
 \begin{eqnarray*}
 (T_{2}\otimes \iota)(\iota\otimes T_{1}) = (\iota\otimes T_{1}) (T_{2}\otimes \iota).
 \end{eqnarray*}

 (2) We use the usual ‘Sweedler’ notation for coalgebras $\Delta(a) = a_{(1)}\otimes a_{(2)}$, etc. The equation (\ref{3.1}) implies that 
 \begin{eqnarray*}
   &&  (\varepsilon \otimes \iota)T_{1}(a\otimes b)  =  (\varepsilon \otimes \iota)(\Delta(a)(1\otimes b))  =  (\varepsilon \otimes \iota)(a_{(1)}\otimes a_{(2)}b)  = \varepsilon(a_{(1)}) a_{(2)} b = ab, \\
   && (\iota \otimes \varepsilon)T_{2}(a\otimes b)   =  (\iota \otimes \varepsilon)((a\otimes 1)\Delta(b))  =  (\iota \otimes \varepsilon)(ab_{(1)}\otimes b_{(2)})  = ab_{(1)} \varepsilon(b_{(2)}) = ab
 \end{eqnarray*} 
 for all $b\in A$, that is $(\varepsilon \otimes \iota)\Delta(a)= a = (\iota \otimes \varepsilon)\Delta(a)$.
  
  (3) $\varepsilon$ is a algebra homomorphism. This means
  \begin{eqnarray}
  \varepsilon(ab) = \varepsilon(a) \varepsilon(b). 
  \end{eqnarray}
  Indeed, by $(\ref{3.1})$ we have $(\iota \otimes \varepsilon)T_{2}(a\otimes bc) = (\iota \otimes \varepsilon)\big((a\otimes 1) \Delta(bc)\big) = abc$. Then
  \begin{eqnarray*}
  (\iota \otimes \varepsilon)\big((a\otimes 1) \Delta(b)\Delta(c)\big) = abc = (\iota \otimes \varepsilon)\big((a\otimes 1) \Delta(b)\big) c.
 \end{eqnarray*} 
 By surjective of $T_{2}$, 
 \begin{eqnarray*}
  (\iota \otimes \varepsilon)\big((a\otimes b)\Delta(c)\big) 
  &=& (\iota \otimes \varepsilon)\big((a\otimes b)\big) c = \varepsilon(b)ac \\
  &=&  \varepsilon(b) (\iota \otimes \varepsilon)\big((a\otimes 1) \Delta(c)\big).
 \end{eqnarray*} 
 Again by surjective of $T_{2}$, we have $(\iota \otimes \varepsilon)(a\otimes bc) = \varepsilon(b) (\iota \otimes \varepsilon)(a\otimes c)$.
 
 (4) If Galois maps $T_{1}, T_{2}$ are bijective, replacing $a\otimes b$ in (\ref{3.1}) by $T_{1}^{-1}(a\otimes b)$, we can also get
  \begin{eqnarray}
  \varepsilon (a) b = (m \circ T_{1}^{-1})(a\otimes b) , \qquad
   a \varepsilon (b) =  (m \circ T_{2}^{-1}) (a\otimes b). \label{3.2}
    \end{eqnarray}
\end{remark}

 Let $(A, \Delta)$ be a nondegenerate associative algebra with a (not necessarily coassociative) comultiplication $\Delta$ and a counit $\varepsilon$,
 without confusion we call $(A, \Delta)$  a \emph{multiplier bialgebra}. Now we define generalized multiplier Hopf coquasigroup, which is chosen to distinguish it from the previous definition in \cite{Y22}.

\begin{definition}
     Let $(A, \Delta, \varepsilon)$ be a multiplier bialgebra. 
     $(A, \Delta)$ is called a \emph{generalized multiplier Hopf coquasigroup}, 
     if Galois maps $T_{1}, T_{2}$ are bijective and $T_{1}$ (respectively $T_{2}$)  has a left (respectively right) $A$-colinear inverse.
     
     Generalized multiplier Hopf coquasigroup $(A, \Delta)$ is \emph{regular}, if $(A, \Delta^{cop})$ is also a generalized multiplier Hopf coquasigroup.
\end{definition}

\begin{remark}
 (1) Following Proposition 2.1, multiplier Hopf coquasigroups introduced in \cite{Y22} are generalized multiplier Hopf coquasigroups.
 
 (2) The comultiplication of generalized multiplier Hopf coquasigroup is not necessarily coassociative. 
 This is the main difference between generalized multiplier Hopf coquasigroups and multiplier Hopf algebras.   
 If the comultiplication in $(A, \Delta)$ is coassociative, then generalized multiplier Hopf coquasigroup $(A, \Delta)$ is actually a multiplier Hopf algebra introduced in \cite{V94}. 
\end{remark}
 
  Let $(A, \Delta)$ be a generalized multiplier Hopf coquasigroup. We will construct an antimultiplicative  $S: A \longrightarrow M(A)$ that has the properties of antipode in multiplier Hopf algebras.
 
 \begin{definition}
  Define a map $S: A \longrightarrow L(A)$ by
    \begin{eqnarray}
   S(a)b = (\varepsilon \otimes \iota) T_{1}^{-1} (a\otimes b), \label{3.3}
    \end{eqnarray}
  here $S(a)\in L(A)$ is indeed a left multiplier for all $a\in A$. 
\end{definition}

\begin{lemma}
  If $T_{1}$ has a left $A$-colinear inverse. Then 
  \begin{eqnarray}
   m((\iota\otimes S)\big((c\otimes 1)\Delta(a)\big)(1\otimes b)) = c\varepsilon(a) b. \label{3.4}
    \end{eqnarray}
\end{lemma}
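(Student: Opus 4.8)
The plan is to expand the left-hand side of (\ref{3.4}) in Sweedler notation, identify it with $m$ applied to $(c\otimes 1)T_{1}^{-1}(a\otimes b)$, and then finish with (\ref{3.2}). First I would record that $(c\otimes 1)\Delta(a)=T_{2}(c\otimes a)\in A\otimes A$, so that the left-hand side of (\ref{3.4}) is well defined; writing $(c\otimes 1)\Delta(a)=ca_{(1)}\otimes a_{(2)}$, it becomes $ca_{(1)}S(a_{(2)})b$, where this is legitimate because the factor $c$ covers the first leg --- concretely it is $m$ applied to $(\iota\otimes\varepsilon\otimes\iota)(\iota\otimes T_{1}^{-1})$ of the genuine element $ca_{(1)}\otimes a_{(2)}\otimes b=(c\otimes 1\otimes 1)(\Delta\otimes\iota)(a\otimes b)\in A\otimes A\otimes A$.

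The key step is to invoke the hypothesis that $T_{1}^{-1}$ is left $A$-colinear, that is, $T_{1}^{-1}=(\iota\otimes\varepsilon\otimes\iota)(\iota\otimes T_{1}^{-1})(\Delta\otimes\iota)$. Multiplying this identity on the left by $c\otimes 1$ and moving $c\otimes 1\otimes 1$ inside the triple tensor, exactly as in the proof of Proposition~2.1, I get in $A\otimes A$
\begin{align*}
(c\otimes 1)T_{1}^{-1}(a\otimes b)
&=(\iota\otimes\varepsilon\otimes\iota)(\iota\otimes T_{1}^{-1})\big(ca_{(1)}\otimes a_{(2)}\otimes b\big)\\
&=ca_{(1)}\otimes(\varepsilon\otimes\iota)T_{1}^{-1}(a_{(2)}\otimes b)=ca_{(1)}\otimes S(a_{(2)})b,
\end{align*}
where the last equality is the definition (\ref{3.3}) of $S$. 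Applying $m$: the right-hand side is exactly the left-hand side of (\ref{3.4}), while on the left $m\big((c\otimes 1)T_{1}^{-1}(a\otimes b)\big)=c\,m\big(T_{1}^{-1}(a\otimes b)\big)$ since left multiplication by $c\otimes 1$ passes through $m$ on $A\otimes A$. Finally the first identity of (\ref{3.2}), which needs only the bijectivity of $T_{1}$ (part of the hypothesis, as $T_{1}$ has an inverse), gives $m\big(T_{1}^{-1}(a\otimes b)\big)=\varepsilon(a)b$, so the left-hand side of (\ref{3.4}) equals $c\varepsilon(a)b$, as desired.

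I expect the main obstacle to be purely the bookkeeping of covers: one must be careful that $ca_{(1)}\otimes a_{(2)}$, $ca_{(1)}\otimes a_{(2)}\otimes b$ and $ca_{(1)}\otimes S(a_{(2)})b$ are honest elements of the relevant tensor powers of $A$ (and not merely of multiplier algebras), which is what justifies applying $\varepsilon$, $T_{1}^{-1}$ and $m$ leg by leg; the factor $c$ present on both sides of (\ref{3.4}) is there precisely to supply this cover. It is worth noting that no coassociativity of $\Delta$ enters anywhere, which is consistent with the intended role of this lemma in the generalized (non-coassociative) setting.
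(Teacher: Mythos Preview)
Your proposal is correct and follows essentially the same route as the paper: both use the left $A$-colinearity of $T_{1}^{-1}$ (after multiplying by $c\otimes 1$) to identify $(c\otimes 1)T_{1}^{-1}(a\otimes b)$ with $ca_{(1)}\otimes S(a_{(2)})b=(\iota\otimes S)\big((c\otimes 1)\Delta(a)\big)(1\otimes b)$, then apply $m$ and invoke (\ref{3.2}). Your extra remarks on covers and well-definedness are sound but the argument is otherwise the paper's own.
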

\begin{proof}
  Since $T_{1}^{-1}$ is the left $A$-colinear inverse of $T_{1}$, then for all $a, b,c \in A$,
  \begin{eqnarray*}
  (c\otimes 1)T_{1}^{-1}(a\otimes b) 
  &=& (c\otimes 1)(\iota \otimes \varepsilon \otimes \iota) (\iota \otimes T_{1}^{-1}) (\Delta \otimes \iota)(a\otimes b) \\
  &=& (\iota \otimes \varepsilon \otimes \iota) (\iota \otimes T_{1}^{-1}) ((c\otimes 1)\Delta(a) \otimes b) \\ 
  &=& (\iota \otimes (\varepsilon \otimes \iota)T_{1}^{-1}) (c a_{(1)}\otimes a_{(2)} \otimes b) \\ 
  &=& c a_{(1)}\otimes S(a_{(2)}) b \\ 
  &=& (\iota\otimes S)\big((c\otimes 1)\Delta(a)\big) (1\otimes b).
  \end{eqnarray*}
  If we apply the multiplication $m$ on the above equation, we get
  \begin{eqnarray*}
  m \Big((\iota\otimes S)\big((c\otimes 1)\Delta(a)\big)(1\otimes b) \Big)
  &=&  m \Big( (c\otimes 1)T_{1}^{-1}(a\otimes b) \Big) \\
  &=&  c m \Big(T_{1}^{-1}(a\otimes b) \Big) \\
  &\stackrel{(\ref{3.2})}{=}& c\varepsilon(a) b.
  \end{eqnarray*}
  This complete the proof.
\end{proof}

\begin{lemma}
  $S$ is antimultiplicative, that is $S(ab) = S(b) S(a)$  for all $a, b \in A$.
\end{lemma}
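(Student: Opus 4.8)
Since $S(ab),\,S(b)S(a)\in L(A)$ and a left multiplier is determined by its action on $A$, it is enough to show $S(ab)\,d=S(b)\big(S(a)\,d\big)$ for all $d\in A$. My plan is to first establish the ``$\Delta$-entangled'' form of antimultiplicativity
\[
(c\otimes 1)\,T_1^{-1}(ab\otimes d)=\sum ca_{(1)}b_{(1)}\otimes S(b_{(2)})\,S(a_{(2)})\,d \qquad (a,b,c,d\in A),
\]
and then to collapse it by the counit. The right-hand side is a bona fide element of $A\otimes A$, namely $(c\otimes 1)\,\Psi_b\big(T_1^{-1}(a\otimes d)\big)$, where $\Psi_b\colon A\otimes A\to A\otimes A$ is the linear map $\Psi_b(x\otimes y)=(x\otimes 1)\,T_1^{-1}(b\otimes y)$; its ``Sweedler'' shape comes from the reformulation used in the proof of Lemma~3.1, namely $(e\otimes 1)\,T_1^{-1}(p\otimes q)=\sum e p_{(1)}\otimes S(p_{(2)})\,q$ (meaningful because $(e\otimes 1)\Delta(p)=T_2(e\otimes p)\in A\otimes A$), applied once with $p=a$ and once with $p=b$.

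To prove the entangled identity I would apply $T_1$ to both sides and use its injectivity. Two elementary facts suffice: $T_1\big((c\otimes 1)v\big)=\Delta(c)\,T_1(v)$ for $v\in A\otimes A$ (immediate from $T_1(x\otimes y)=\Delta(x)(1\otimes y)$ and $\Delta$ being an algebra map), and $\Delta(x)(b\otimes y)=T_1(x\otimes y)(b\otimes 1)$ (because $(1\otimes y)$ and $(b\otimes 1)$ commute). Using the first, $T_1$ of the left-hand side is $\Delta(c)(ab\otimes d)$; and for the right-hand side, writing $v=\sum_l v_l\otimes z_l\in A\otimes A$,
\[
T_1\big(\Psi_b(v)\big)=\sum_l T_1\big((v_l\otimes 1)T_1^{-1}(b\otimes z_l)\big)=\sum_l \Delta(v_l)\,(b\otimes z_l)=\sum_l T_1(v_l\otimes z_l)\,(b\otimes 1)=T_1(v)\,(b\otimes 1),
\]
so with $v=T_1^{-1}(a\otimes d)$ one gets $T_1\big(\Psi_b(v)\big)=(a\otimes d)(b\otimes 1)=ab\otimes d$, and hence $T_1$ of the right-hand side is again $\Delta(c)(ab\otimes d)$. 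Injectivity of $T_1$ finishes this step.

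Finally I would apply $(\varepsilon\otimes\iota)$ to the entangled identity. Using that $\varepsilon$ is an algebra homomorphism (Remark~3.1(3)) together with the collapse $\sum\varepsilon(a_{(1)})S(a_{(2)})=S(a)$ — itself obtained by applying $(\varepsilon\otimes\iota)$ to the left $A$-colinearity $T_1^{-1}=(\iota\otimes\varepsilon\otimes\iota)(\iota\otimes T_1^{-1})(\Delta\otimes\iota)$ — the identity becomes $\varepsilon(c)\,S(ab)\,d=\varepsilon(c)\,S(b)\big(S(a)\,d\big)$ for all $c$. Since $\varepsilon\neq 0$ (otherwise $ab=(\varepsilon\otimes\iota)T_1(a\otimes b)=0$ for all $a,b$, contradicting non-degeneracy), choosing $c$ with $\varepsilon(c)=1$ gives $S(ab)\,d=S(b)\big(S(a)\,d\big)$, i.e.\ $S(ab)=S(b)S(a)$.

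The step I expect to be the main obstacle is the entangled identity, and more precisely keeping the multiplier-algebra bookkeeping honest: expressions such as $\sum c a_{(1)}b_{(1)}\otimes S(b_{(2)})S(a_{(2)})d$ acquire literal meaning only after a spectator factor $c\otimes 1$ is carried along so that $\Delta$ lands inside $A\otimes A$. This is also exactly where the left $A$-colinearity of $T_1^{-1}$ is used (through the reformulation $(e\otimes 1)T_1^{-1}(p\otimes q)=\sum e p_{(1)}\otimes S(p_{(2)})q$): for a bare multiplier bialgebra the map $(\varepsilon\otimes\iota)T_1^{-1}$ need not be antimultiplicative, and — the comultiplication not being coassociative — one cannot fall back on the usual uniqueness-of-convolution-inverse argument that forces antimultiplicativity of the antipode in the (multiplier) Hopf algebra case.
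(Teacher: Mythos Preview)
Your argument is correct, but it follows a genuinely different route from the paper's. The paper proves Lemma~3.2 by feeding $\Delta(ab)=\Delta(a)\Delta(b)$ into the identity of Lemma~3.1, $m\big((\iota\otimes S)((c\otimes 1)\Delta(a))(1\otimes d)\big)=c\,\varepsilon(a)\,d$, and then invokes surjectivity of $T_2$ twice to strip off the two comultiplications; this chain of substitutions lands on $cS(ab)d=cS(b)S(a)d$. You instead establish the operator identity $T_1\circ\Psi_b=(-)(b\otimes 1)\circ T_1$ and use injectivity of $T_1$ to get $T_1^{-1}(ab\otimes d)=\Psi_b\big(T_1^{-1}(a\otimes d)\big)$, then read off the result with $(\varepsilon\otimes\iota)$.

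Two remarks on what each approach buys. First, your derivation is in fact \emph{more} elementary than you claim: the collapse step can be done without colinearity, since $(\varepsilon\otimes\iota)\Psi_b(x\otimes y)=\varepsilon(x)\,(\varepsilon\otimes\iota)T_1^{-1}(b\otimes y)=\varepsilon(x)\,S(b)\,y$ directly from the definition of $S$ and multiplicativity of $\varepsilon$, so the whole proof uses only bijectivity of $T_1$, multiplicativity of $\Delta$, and multiplicativity of $\varepsilon$ (hence surjectivity of $T_2$, via Remark~3.1(3)). The paper's proof, by contrast, genuinely relies on Lemma~3.1 and thus on the left $A$-colinearity of $T_1^{-1}$. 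Second, the paper's substitution technique via surjectivity of $T_2$ is the same device used elsewhere (e.g.\ in Remark~3.1(3) and in the $S=S'$ argument of Lemma~3.3), so it has the virtue of methodological uniformity; your $\Psi_b$ trick is slicker here but more ad hoc.
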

\begin{proof}
  It is similar to the proof of Lemma 4.4 in \cite{V94}. 
  \begin{eqnarray*}
   m((\iota\otimes S)\big((c\otimes 1)\Delta(a)\Delta(b)\big)(1\otimes d)) 
  &=& m((\iota\otimes S)\big((c\otimes 1)\Delta(ab)\big)(1\otimes d)) \\
  &=& c\varepsilon(ab) d = c\varepsilon(a) d \varepsilon(b) \\
  &=& m((\iota\otimes S)\big((c\otimes 1)\Delta(a) \big)(1\otimes d)) \varepsilon(b) ,
  \end{eqnarray*}
  By the surjectivity of $T_{(2)}$, replacing $(c\otimes 1)\Delta(a)$ by $c\otimes a$ we get
  \begin{eqnarray*}
   m((\iota\otimes S)\big((c\otimes a)\Delta(b)\big)(1\otimes d)) 
  &=& m((\iota\otimes S)\big((c\otimes a)\big)(1\otimes d)) \varepsilon(b)  \\
  &=& c S(a)d \varepsilon(b) = c \varepsilon(b) S(a)d \\
  &=&  m((\iota\otimes S)\big((c\otimes 1)\Delta(b)\big)(1\otimes S(a) d)).  
  \end{eqnarray*}
  Again by the surjectivity of $T_{(2)}$, replacing $(c\otimes 1)\Delta(b)$ by $c\otimes b$ we get
  \begin{eqnarray*}
   m((\iota\otimes S)\big((c\otimes ab)\big)(1\otimes d)) 
  =  m((\iota\otimes S)\big(c\otimes b\big)(1\otimes S(a) d)) ,
  \end{eqnarray*}
  i.e., $cS(ab)d = cS(b)S(a)d$.
\end{proof}

\begin{lemma}
 If  $T_{2}$ also has a right $A$-colinear inverse, then $S(a)$  is also a right multiplier for all $a \in A$, i.e. $S(a)\in R(A)$.
\end{lemma}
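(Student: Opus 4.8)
## Proof proposal

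The plan is to build, dually to Definition~3.3, a right-multiplier candidate for $S(a)$ out of the Galois map $T_2$, and then to check that it is compatible with the left multiplier $S(a)$ already in hand, so that $S(a)$ assembles into a genuine two-sided multiplier; in particular $S(a)\in R(A)$.

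First I would carry over the construction of Definition~3.3 and the computation of Lemma~3.1 to the $T_2$ side. Put $aS'(b):=(\iota\otimes\varepsilon)T_2^{-1}(a\otimes b)$. Since $T_2$ commutes with left multiplication by $a'\otimes 1$ on the first leg, so does $T_2^{-1}$, which gives $(a'a)S'(b)=a'\bigl(aS'(b)\bigr)$; thus $b\mapsto bS'(a)$ is a right multiplier of $A$. Using the hypothesis that $T_2$ has a right $A$-colinear inverse, the argument of Lemma~3.1 with $T_2^{-1}$ in place of $T_1^{-1}$ yields
\begin{equation*}
 T_2^{-1}(a\otimes b)(1\otimes c)\;=\;aS'(b_{(1)})\otimes b_{(2)}c ,
\end{equation*}
and then, applying $m$ and (\ref{3.2}), the ``second antipode relation'' $\bigl(aS'(b_{(1)})\bigr)b_{(2)}=a\,\varepsilon(b)$.

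The core of the proof is the identity
\begin{equation*}
 b\,(S(a)c)\;=\;(bS'(a))\,c\qquad\text{for all } a,b,c\in A ,
\end{equation*}
which says exactly that the left multiplier $S(a)$ of Definition~3.3 and the right multiplier $S'(a)$ just built fit together into one element of $M(A)$. Both sides are bilinear in the pair $(b,a)$, so by surjectivity of $T_2$ it suffices to verify it when $b\otimes a$ is replaced by $(p\otimes 1)\Delta(q)=T_2(p\otimes q)$ with $p,q\in A$. For such an element the left-hand side becomes $m\bigl((\iota\otimes S)\bigl((p\otimes 1)\Delta(q)\bigr)(1\otimes c)\bigr)$, which collapses to $p\,\varepsilon(q)\,c$ by Lemma~3.1, while the right-hand side is $\bigl((\iota\otimes\varepsilon)T_2^{-1}(T_2(p\otimes q))\bigr)c=p\,\varepsilon(q)\,c$ because $T_2^{-1}$ inverts $T_2$. (A symmetric argument uses surjectivity of $T_1$ together with the second antipode relation in place of surjectivity of $T_2$ and Lemma~3.1.) Once this is established, $S(a)$ is the multiplier whose left action is $c\mapsto S(a)c$ and whose right action is $b\mapsto bS'(a)$; in particular $S(a)\in R(A)$, and $S'=S$ on $A$.

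I expect the main obstacle to be the multiplier housekeeping hidden in the ``collapse'' step: one must check that the bilinear maps $(b,a)\mapsto b(S(a)c)$ and $(b,a)\mapsto (bS'(a))c$ really descend to $A\otimes A$, and that applying $S$ to a leg of $\Delta(q)$ inside $(p\otimes 1)\Delta(q)$ is legitimate — precisely the bookkeeping that Lemma~3.1 was written to encapsulate, so the cleanest route is to invoke Lemma~3.1 verbatim rather than re-expand it. The conceptual point to keep in mind is that, because $\Delta$ need not be coassociative, one cannot obtain $b(S(a)c)=(bS'(a))c$ from an ``associativity of convolution'' argument; surjectivity of the Galois maps is what replaces it.
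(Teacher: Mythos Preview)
Your proposal is correct and follows essentially the same route as the paper: define $S'$ by $aS'(b)=(\iota\otimes\varepsilon)T_2^{-1}(a\otimes b)$, then use surjectivity of $T_2$ to reduce the compatibility $b(S(a)c)=(bS'(a))c$ to the case $b\otimes a=(p\otimes 1)\Delta(q)$, where Lemma~3.1 (equation~(\ref{3.4})) collapses the $S$-side and the definition of $S'$ collapses the $S'$-side, both to $p\,\varepsilon(q)\,c$. The paper also derives the dual relation $m\bigl((c\otimes 1)(S'\otimes\iota)(\Delta(a)(1\otimes b))\bigr)=c\varepsilon(a)b$ along the way, just as you do, though neither proof actually needs it for the $S=S'$ step; it is recorded for later use.
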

\begin{proof}
  Define $S': A \longrightarrow R(A)$ by
  \begin{eqnarray}
   a S'(b) = (\iota \otimes\varepsilon) T_{2}^{-1} (a\otimes b), \label{3.5}
  \end{eqnarray}
  Since $T_{2}^{-1}$ is the right $A$-colinear inverse of $T_{1}$, then for all $a, b,c \in A$,
  \begin{eqnarray*}
  T_{2}^{-1}(c\otimes a)(1\otimes b) 
  &=& \Big( (\iota \otimes \varepsilon \otimes \iota) (T_{2}^{-1} \otimes\iota) (\iota\otimes \Delta)(c\otimes a) \Big) (1\otimes b) \\
  &=& \Big( ((\iota \otimes\varepsilon) T_{2}^{-1} \otimes\iota) \big(c\otimes \Delta (a) \big) \Big) (1\otimes b) \\
  &=& ((\iota \otimes\varepsilon) T_{2}^{-1} \otimes\iota) \big(c\otimes \Delta (a)(1\otimes b) \big) \\
  &=& (\iota \otimes\varepsilon) T_{2}^{-1} (c\otimes a_{(1)}) \otimes a_{(2)}b \\
  &=& c S'(a_{(1)}) \otimes a_{(2)}b \\
  &=& (c\otimes 1)(S'\otimes \iota)  \big(\Delta (a)(1\otimes b) \big).
  \end{eqnarray*}
  Then we apply the multiplication $m$ on the above equation, we get
  \begin{eqnarray*}
  m \Big( (c\otimes 1)(S'\otimes \iota)  \big(\Delta (a)(1\otimes b) \big) \Big)
  &=&  m \Big( T_{2}^{-1}(c\otimes a)(1\otimes b)  \Big) \\
  &=&  m \Big(T_{2}^{-1}(c\otimes a) \Big) b \\
  &\stackrel{(\ref{3.2})}{=}& c\varepsilon(a) b.
  \end{eqnarray*}
  We claim that $S = S'$.  Indeed, since $T_{2}$ is bijective and if $a\otimes b = \sum (a_{i}\otimes 1)\Delta(b_{i})$, then
  \begin{eqnarray*}
  a S'(b) &=& \sum a_{i} \varepsilon(b_{i}), \\
  a\otimes S(b)c  &=& (\iota\otimes S)(a\otimes b) (1\otimes c) = \sum (\iota\otimes S)\big((a_{i}\otimes 1)\Delta(b_{i})\big) (1\otimes c),
  \end{eqnarray*}
  applying the multiplication to the last equation, we get 
  \begin{eqnarray*}
  a S(b)c 
  &=& m(\sum (\iota\otimes S)\big((a_{i}\otimes 1)\Delta(b_{i})\big) (1\otimes c) ) \\
  &\stackrel{(\ref{3.4})}{=}& \sum a_{i} \varepsilon(b_{i}) c \\
  &=& a S'(b) c.
  \end{eqnarray*}
  This follows $S(b) = S'(b)$ for any $b\in A$.
\end{proof}

\begin{remark}
 (1) These lemmas actually show that $S: A \longrightarrow M(A)$ is an anti-homomorphism satisfying $(\ref{3.4})$ and
 \begin{eqnarray}
  m \Big( (c\otimes 1)(S\otimes \iota)  \big(\Delta (a)(1\otimes b) \big) \Big)
  = c\varepsilon(a) b.
  \end{eqnarray}
  And Galois maps 
  \begin{eqnarray*}
  T_{1}^{-1}(a\otimes b)  &=&  a_{(1)}\otimes S(a_{(2)}) b, \\ 
  T_{2}^{-1}(c\otimes a)  &=&  c S(a_{(1)}) \otimes a_{(2)}.
  \end{eqnarray*}
  
  (2) $T_{2} \circ T_{2}^{-1} = \iota = T_{2}^{-1} \circ T_{2}$ implies the equation (\ref{2.1}), while  $T_{1} \circ T_{1}^{-1} = \iota = T_{1}^{-1} \circ T_{1}$ implies the equation (\ref{2.2}).
  
  Indeed, take $T_{1}$ for as an example,
  \begin{eqnarray*}
  && a\otimes b 
  = (T_{1} \circ T_{1}^{-1} )(a\otimes b) 
  = T_{1}( a_{(1)}\otimes S(a_{(2)}) b) =  a_{(1)(1)}\otimes a_{(1)(2)}S(a_{(2)}) b, \\
  && a\otimes b 
  = (T_{1}^{-1} \circ T_{1} )(a\otimes b) 
  = T_{1}^{-1} ( a_{(1)}\otimes a_{(2)} b) =  a_{(1)(1)}\otimes S(a_{(1)(2)}) a_{(2)} b.
  \end{eqnarray*}
  
  (3) $\varepsilon\circ S = \varepsilon$.
  
   In fact,
   \begin{eqnarray*}
   \varepsilon(S(a)b) 
   &=& \varepsilon \big( (\varepsilon \otimes \iota) T_{1}^{-1} (a\otimes b) \big) 
   = (\varepsilon \otimes \varepsilon) T_{1}^{-1} (a\otimes b) \\
   &\stackrel{(*)}{=}& \varepsilon \big( m \circ T_{1}^{-1} (a\otimes b) \big) 
   \stackrel{(\ref{3.2})}{=} \varepsilon (\varepsilon (a) b) \\
   &=& \varepsilon (a) \varepsilon (b),
    \end{eqnarray*}
    where $(*)$ holds because $\varepsilon$ is a algebra homomorphism.
\end{remark}

Follow the above lemmas, we can get one of the main results.

 \begin{theorem}
 If $(A, \Delta)$ be a generalized multiplier Hopf coquasigroup with an identity, then $(A, \Delta)$ is a Hopf coquasigroup.
 \end{theorem}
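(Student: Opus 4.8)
The plan is to verify that a generalized multiplier Hopf coquasigroup with identity satisfies the axioms of a Hopf coquasigroup in the sense of \cite{K,KM}. Since $A$ has an identity $1_A$, we have $M(A) = A$ and $M(A\otimes A) = A\otimes A$, so $\Delta: A\longrightarrow A\otimes A$ takes values in $A\otimes A$ itself, and the map $S: A\longrightarrow M(A) = A$ constructed in Definitions 3.3 and the lemmas is an honest $k$-linear endomorphism of $A$. Thus the ambient data $(A,\Delta,\varepsilon,S)$ is already of the form required for a Hopf coquasigroup; what remains is to check the counit axioms, the bialgebra compatibility, and the antipode (quasi-)axioms \eqref{2.1}--\eqref{2.2}.

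First I would note that $\Delta$ is an algebra homomorphism by the definition of comultiplication in Definition 3.1, and $\varepsilon$ is an algebra homomorphism by Remark 3.1(3); the counit property $(\varepsilon\otimes\iota)\Delta(a) = a = (\iota\otimes\varepsilon)\Delta(a)$ is exactly Remark 3.1(2). Next, $S$ is antimultiplicative by Lemma 3.2. For the antipode identities, I would invoke Remark 3.3(1)--(2): the explicit formulas $T_1^{-1}(a\otimes b) = a_{(1)}\otimes S(a_{(2)})b$ and $T_2^{-1}(c\otimes a) = cS(a_{(1)})\otimes a_{(2)}$ hold, and the bijectivity relations $T_1\circ T_1^{-1} = \iota = T_1^{-1}\circ T_1$, $T_2\circ T_2^{-1} = \iota = T_2^{-1}\circ T_2$ yield precisely equations \eqref{2.1} and \eqref{2.2}, i.e.,
\begin{eqnarray*}
&& a_{(1)(1)}\otimes a_{(1)(2)}S(a_{(2)})b = a\otimes b, \qquad a_{(1)(1)}\otimes S(a_{(1)(2)})a_{(2)}b = a\otimes b,\\
&& a b_{(1)(1)}\otimes b_{(1)(2)}S(b_{(2)}) = a\otimes b, \qquad a b_{(1)(1)}\otimes S(b_{(1)(2)})b_{(2)} = a\otimes b.
\end{eqnarray*}
Setting $b = 1_A$ (legitimate now that $A$ is unital) collapses these to the Hopf coquasigroup antipode axioms $a_{(1)(1)}\otimes a_{(1)(2)}S(a_{(2)}) = a\otimes 1_A$ and the three companions; equivalently, applying $m$ gives the four identities displayed in axiom (3) of the multiplier Hopf coquasigroup definition, which are exactly the defining identities of a Hopf coquasigroup.

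The only genuine point requiring care — and the step I expect to be the main obstacle — is confirming that $S$ is \emph{anticomultiplicative}, i.e. $\Delta(S(a)) = (S\otimes S)(\Delta^{cop}(a))$, since this is part of the Hopf coquasigroup axioms but was not established by the preceding lemmas. Here I would use the regularity hypothesis: $(A,\Delta^{cop})$ is also a generalized multiplier Hopf coquasigroup, so it carries its own antipode $\bar S$ built from the flipped Galois maps, and one shows $\bar S = S$ using the uniqueness of the antipode (as in the standard multiplier Hopf algebra argument). Comparing the inverse-Galois-map formulas for $(A,\Delta)$ and $(A,\Delta^{cop})$ then forces $\Delta\circ S = (S\otimes S)\circ\Delta^{cop}$, i.e. $S$ is anticomultiplicative. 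With unitality in hand this is a finite-dimensional-flavoured computation with no convergence or multiplier-extension subtleties, so once anticomultiplicativity is secured, assembling the remaining axioms is routine and the theorem follows.
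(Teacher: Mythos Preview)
Your first two paragraphs are essentially the paper's argument: once $A$ is unital, $M(A)=A$, the preceding lemmas and Remark~3.3 supply $\varepsilon$, $S$, and equations \eqref{2.1}--\eqref{2.2}, and setting $b=1_A$ (or $c=1_A$) recovers the Klim--Majid axioms. That part is fine, apart from a minor slip in your second displayed pair: the $T_2$-identities should read $aS(b_{(1)})b_{(2)(1)}\otimes b_{(2)(2)} = a\otimes b$ and $ab_{(1)}S(b_{(2)(1)})\otimes b_{(2)(2)} = a\otimes b$, which are the ones yielding \eqref{2.1}; what you wrote is just the $T_1$ pair again with relabelled variables.

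The genuine problem is your last paragraph. First, anticomultiplicativity of $S$ is \emph{not} part of the Hopf coquasigroup axioms in \cite{K,KM}; there one only requires that $\Delta,\varepsilon$ be algebra maps, the counit identity, and the four antipode identities \eqref{2.1}--\eqref{2.2}. Antimultiplicativity and anticomultiplicativity of $S$ are consequences (Proposition~4.2 of \cite{KM}), not hypotheses. So there is nothing further to prove. Second, you invoke a ``regularity hypothesis'' to handle this nonexistent obligation, but the theorem has no such hypothesis: it concerns an arbitrary generalized multiplier Hopf coquasigroup with identity, not a regular one. In the paper, regularity and anticomultiplicativity (Propositions~3.1 and~3.2) are treated \emph{after} this theorem precisely because they are not needed for it. Drop the last paragraph entirely and your proof is complete and matches the paper's.
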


 In the following we discuss the regularity of $(A, \Delta)$.
 Let $(A, \Delta)$ be a regular generalized multiplier Hopf coquasigroup. Then $(A, \Delta^{cop})$ is again a generalized multiplier Hopf coquasigroup. 
 Let $\varepsilon^{cop}$ and $S^{cop}$ be the associate counit and antipode.

\begin{proposition}
  (1) $\varepsilon^{cop} = \varepsilon$;
  
  (2) $S(A)\subseteq A$, $S^{cop}(A)\subseteq A$ and $S^{cop} = S^{-1}$. 
\end{proposition}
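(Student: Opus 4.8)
For part~(1), the plan is to check that the original counit $\varepsilon$ already serves as a counit for $(A,\Delta^{cop})$ and then invoke uniqueness of the counit. Writing $\Delta^{cop}=\tau\circ\Delta$, where $\tau$ is the flip automorphism of $M(A\otimes A)$, one has $(\varepsilon\otimes\iota)\circ\tau=(\iota\otimes\varepsilon)$ and $(\iota\otimes\varepsilon)\circ\tau=(\varepsilon\otimes\iota)$ on $M(A\otimes A)$, so the computation of Remark~3.1(2) gives
\[
(\varepsilon\otimes\iota)\big(\Delta^{cop}(a)(1\otimes b)\big)=\big((\iota\otimes\varepsilon)\Delta(a)\big)b=ab,\qquad (\iota\otimes\varepsilon)\big((a\otimes 1)\Delta^{cop}(b)\big)=ab,
\]
i.e.\ $\varepsilon$ satisfies Definition~3.1 for $\Delta^{cop}$. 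The counit of a multiplier bialgebra is unique: if $\varepsilon_{1},\varepsilon_{2}$ are counits then $(\varepsilon_{i}\otimes\iota)\Delta=\iota=(\iota\otimes\varepsilon_{i})\Delta$ by Remark~3.1(2), and since a counit cannot be zero (otherwise $A^{2}=0$), the standard computation $\varepsilon_{2}=(\varepsilon_{1}\otimes\varepsilon_{2})\Delta=\varepsilon_{1}$ applies. Hence $\varepsilon^{cop}=\varepsilon$.

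For part~(2) the plan is to follow Van Daele's treatment of regular multiplier Hopf algebras \cite{V94}, adapted to the non-coassociative setting. Let $T_{1}^{cop},T_{2}^{cop}$ be the Galois maps of $\Delta^{cop}$; then $T_{1}^{cop}=\tau\circ T_{3}$ and $T_{2}^{cop}=\tau\circ T_{4}$ where $T_{3}(a\otimes b)=\Delta(a)(b\otimes 1)$ and $T_{4}(a\otimes b)=(1\otimes a)\Delta(b)$, so regularity of $(A,\Delta)$ makes $T_{3}$ and $T_{4}$ bijections of $A\otimes A$ (with $A$-colinear inverses). Since $(A,\Delta^{cop})$ is a generalized multiplier Hopf coquasigroup, Remark~3.3 applied to it (using $\varepsilon^{cop}=\varepsilon$ from part~(1)) yields
\[
(T_{1}^{cop})^{-1}(a\otimes b)=a_{(2)}\otimes S^{cop}(a_{(1)})b,\qquad (T_{2}^{cop})^{-1}(c\otimes a)=cS^{cop}(a_{(2)})\otimes a_{(1)},
\]
both in $A\otimes A$. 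From $T_{1}^{cop}\circ(T_{1}^{cop})^{-1}=\iota=(T_{1}^{cop})^{-1}\circ T_{1}^{cop}$ and the analogous identities for $T_{2}^{cop}$ I would first read off the weak antipode identities for $\Delta^{cop}$, i.e.\ the $\Delta^{cop}$-analogues of (\ref{2.1})--(\ref{2.2}). Combining these with (\ref{2.1})--(\ref{2.2}) for $\Delta$, the antimultiplicativity of $S$ and of $S^{cop}$ (Lemma~3.2 for both comultiplications), the relations $\varepsilon\circ S=\varepsilon=\varepsilon\circ S^{cop}$ (Remark~3.3(3)), and the surjectivity of $T_{2}$ and $T_{2}^{cop}$ — replacing expressions $(c\otimes 1)\Delta(\,\cdot\,)$ by $c\otimes(\,\cdot\,)$ exactly as in the proofs of Lemmas~3.2 and~3.3 — one should obtain $S\circ S^{cop}=\iota$ and $S^{cop}\circ S=\iota$. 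Together with $S^{cop}(A)\subseteq A$ this shows $S\colon A\to A$ is bijective with $S^{-1}=S^{cop}$.

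The main obstacle is the inclusion $S(A)\subseteq A$ (equivalently $S^{cop}(A)\subseteq A$), which is exactly where the non-unitality matters: a priori $S(a)$ is only a multiplier, and the defining formula (\ref{3.3}) cannot by itself put $S(a)$ into $A$, because attempting to compute $S\circ S^{cop}$ directly from the two defining formulas (\ref{3.3}) is circular — the counit collapses the composition. The plan is instead to extract the inclusion from the explicit inverse Galois maps: starting from $cS^{cop}(a_{(2)})\otimes a_{(1)}=(T_{2}^{cop})^{-1}(c\otimes a)\in A\otimes A$ and $cS(a_{(1)})\otimes a_{(2)}=T_{2}^{-1}(c\otimes a)\in A\otimes A$, and applying further Galois maps and the counit, together with $A^{2}=A$ (which follows from surjectivity of $T_{1}$ and $\varepsilon\neq 0$), one concludes that $S(a),S^{cop}(a)\in A$. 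One must also keep in mind that, $\Delta$ not being coassociative, there is no appeal to uniqueness of convolution inverses to identify $S^{cop}$ with $S^{-1}$ in a single step; this identification has to come from the Galois-map inversion identities above. Once $S(A)\subseteq A$, $S^{cop}(A)\subseteq A$ and $S\circ S^{cop}=S^{cop}\circ S=\iota$ are established, all three assertions of the proposition follow.
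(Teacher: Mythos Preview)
Your treatment of part~(1) is fine and essentially equivalent to the paper's: you verify that $\varepsilon$ is a counit for $\Delta^{cop}$ and invoke uniqueness, while the paper writes both $\varepsilon$ and $\varepsilon^{cop}$ against the same bijection $a\otimes b\mapsto (1\otimes a)\Delta(b)$ and cancels. The non-coassociativity plays no role in the uniqueness step $\varepsilon_{1}=(\varepsilon_{1}\otimes\varepsilon_{2})\Delta=\varepsilon_{2}$, so your argument is sound.

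Part~(2), however, has a genuine gap in precisely the place you flag as the obstacle. Your proposal to extract $S(A)\subseteq A$ from $T_{2}^{-1}(c\otimes a)=cS(a_{(1)})\otimes a_{(2)}\in A\otimes A$ by ``applying further Galois maps and the counit'' does not work in any obvious way: applying $\iota\otimes\varepsilon$ only yields $cS(a)\in A$, which is automatic for a multiplier, and there is no evident Galois manipulation that strips the $c$ off. Likewise, your plan to obtain $S\circ S^{cop}=\iota$ by ``combining the weak antipode identities'' for $\Delta$ and $\Delta^{cop}$ is not a workable route: those identities involve iterated coproducts in $M(A\otimes A)$, and without coassociativity there is no mechanism to collapse them into a statement about the composite $S^{cop}\circ S$ on $A$.

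The paper's device is different and more economical. Write $a\otimes b=\sum_{i}\Delta(a_{i})(1\otimes b_{i})$, so that $S(a)b=\sum_{i}\varepsilon(a_{i})b_{i}$; flipping gives $b\otimes ac=\sum_{i}\Delta^{cop}(a_{i})(b_{i}\otimes c)$. Now apply $S^{cop}\otimes\iota$, multiply by $d\otimes 1$, use antimultiplicativity of $S^{cop}$ and the $(A,\Delta^{cop})$-version of the formula $m\big((c'\otimes 1)(S^{cop}\otimes\iota)(\Delta^{cop}(a')(1\otimes b'))\big)=c'\varepsilon(a')b'$, and multiply out. This yields the single identity
\[
S^{cop}(b)\,a \;=\; S^{cop}\big(S(a)b\big)\qquad (a,b\in A).
\]
Everything then follows at once: since the elements $S(a)b$ span $A$ (from the surjectivity of $T_{1}^{-1}$ and Definition~3.3) and the left-hand side lies in $A$, one gets $S^{cop}(A)\subseteq A$; antimultiplicativity of $S^{cop}$ turns the identity into $S^{cop}(b)\,a=S^{cop}(b)\,S^{cop}(S(a))$, whence $S^{cop}\circ S=\iota$, and symmetry in the roles of $(A,\Delta)$ and $(A,\Delta^{cop})$ gives the other composite. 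This key identity is the missing ingredient in your sketch.
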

\begin{proof}
    The proof is the same as in Lemma 5.1 and Proposition 5.2 of the paper \cite{V94}.
    
    (1) If rewrite the formula $(\ref{3.1})$ for $\varepsilon^{cop}$, we find that 
    \begin{eqnarray*}
    ab 
    &=& (\iota \otimes \varepsilon^{cop})T_{2}(a\otimes b)  
    = (\iota \otimes \varepsilon^{cop})\big((a\otimes 1)\Delta^{cop}(b)) \big) \\
    &=& (\varepsilon^{cop} \otimes\iota)\big((1\otimes a)\Delta(b)) \big) ,
    \end{eqnarray*}
    on the other hand, $ (\varepsilon \otimes\iota)\big((1\otimes a)\Delta(b)) \big)c = a (\varepsilon \otimes\iota)\big(\Delta(b)(1\otimes c)) \big) =abc$, i.e.,
    \begin{eqnarray*}
    ab     
    &=& (\varepsilon \otimes\iota)\big((1\otimes a)\Delta(b) \big).
    \end{eqnarray*}
 The surjectivity of $T_{2}$ and the flip map implies that $a\otimes b \longrightarrow (1\otimes a)\Delta(b)$ is bijective, hence $\varepsilon^{cop} = \varepsilon$.
 
 (2) If $a\otimes b = T_{1}(\sum a_{i}\otimes b_{i}) = \sum \Delta( a_{i})(1\otimes b_{i})$, then by $(\ref{3.4})$ $S(a)b = (\varepsilon \otimes \iota) T_{1}^{-1} (a\otimes b) = \sum \varepsilon( a_{i}) b_{i}$.
 Then apply the flip map, 
 \begin{eqnarray*}
  b\otimes a =  \sum \Delta^{cop}( a_{i})(b_{i}\otimes 1),\quad  b\otimes ac =  \sum \Delta^{cop}( a_{i})(b_{i}\otimes c).
 \end{eqnarray*}
 If we apply $S^{cop}\otimes \iota$ and multiply with $d\otimes 1$, 
 \begin{eqnarray*}
  dS^{cop}(b)\otimes ac 
  &=&  \sum (d\otimes 1)(S^{cop}\otimes \iota) \big( \Delta^{cop}( a_{i})(b_{i}\otimes c) \big)  \\
  &=&  \sum (d\otimes 1)(S^{cop}(b_{i})\otimes 1)(S^{cop}\otimes \iota) \big( \Delta^{cop}( a_{i})(1 \otimes c) \big),  
 \end{eqnarray*}
 and take the multiplication, we get
 \begin{eqnarray*}
  d S^{cop}(b)ac 
  &=&   \sum d S^{cop}(b_{i})\varepsilon( a_{i}) c =  dS^{cop} \big(\sum\varepsilon( a_{i})(b_{i})\big) c =  dS^{cop} \big( S(a)b \big) c.  
 \end{eqnarray*}
 This shows that $$ S^{cop}(b)a = S^{cop} \big( S(a)b \big).$$
 From Definition 3.3, the elements of the form $ S(a)b$ span $A$. Then the above formula implies $S^{cop}(A)\subseteq A$, and $cS^{cop}(b)a = cS^{cop}(b)S^{cop} \big( S(a) \big)$.
 Therefore $S^{cop} \big( S(a) \big) = a$, i.e., $S^{cop} = S^{-1}$. 
\end{proof}
 
 If generalized multiplier Hopf coquasigroup $(A, \Delta)$ is commutative or cocommutative, i.e., $\Delta^{cop}=\Delta$, then $(A, \Delta)$ is automatically regular and $S^{cop} = S$, $S^{2} = \iota$.
 Following the above proposition, we give an characterization of regularity.
 
 \begin{theorem}
 Let $(A, \Delta)$ be a generalized multiplier Hopf coquasigroup. $(A, \Delta)$ is regular if and only if  $S$ is invertible.
 \end{theorem}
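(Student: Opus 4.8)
\noindent The plan is to prove the two implications separately. For ``regular $\Rightarrow$ $S$ invertible'': by the definition of regularity (Definition 3.2), $(A,\Delta^{cop})$ is again a generalized multiplier Hopf coquasigroup, so Proposition 3.1 applies and gives $S^{cop}=S^{-1}$ with $S^{cop}\colon A\to A$; hence $S$ has a two-sided inverse, and this direction is complete.

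For the converse, assume $S$ is a bijection of $A$ (so $S(A)=A$); then $S^{-1}$ is again anti-multiplicative, which follows formally from the anti-multiplicativity of $S$ (Lemma 3.2) together with its surjectivity. I would show that $(A,\Delta^{cop})$ is a generalized multiplier Hopf coquasigroup with counit $\varepsilon$; by Definition 3.2 this is precisely the regularity of $(A,\Delta)$. Write $\tau$ for the flip on $A\otimes A$, and set $T_1^{cop}(a\otimes b)=\Delta^{cop}(a)(1\otimes b)$, $T_2^{cop}(a\otimes b)=(a\otimes 1)\Delta^{cop}(b)$. The first --- and conceptually crucial --- step is that $\Delta^{cop}$ is a comultiplication in the sense of Definition 3.1, i.e. $T_1^{cop}$ and $T_2^{cop}$ take values in $A\otimes A$; equivalently (apply $\tau$), $(1\otimes x)\Delta(a)\in A\otimes A$ and $\Delta(a)(y\otimes 1)\in A\otimes A$ for all $x,y,a$. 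This is where bijectivity of $S$ is really used: since $S^{-1}$ is a bijection of $A$, both $(\iota\otimes S^{-1})\circ T_1^{-1}$ and $(S^{-1}\otimes\iota)\circ T_2^{-1}$ are bijections of $A\otimes A$, and using $T_1^{-1}(a\otimes b)=a_{(1)}\otimes S(a_{(2)})b$ and $T_2^{-1}(a\otimes b)=aS(b_{(1)})\otimes b_{(2)}$ from Remark 3.3 together with anti-multiplicativity of $S^{-1}$ one computes
$$(\iota\otimes S^{-1})\,T_1^{-1}(a\otimes b)=(1\otimes S^{-1}(b))\Delta(a),\qquad (S^{-1}\otimes\iota)\,T_2^{-1}(a\otimes b)=\Delta(b)(S^{-1}(a)\otimes 1).$$
Since these lie in $A\otimes A$ and $S^{-1}$ is onto, the claim follows, and a one-line computation using $(\varepsilon\otimes\iota)\Delta=\iota=(\iota\otimes\varepsilon)\Delta$ (Remark 3.1) shows $\varepsilon$ is a counit for $\Delta^{cop}$. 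Thus $(A,\Delta^{cop},\varepsilon)$ is a multiplier bialgebra.

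It then remains to show $T_1^{cop}$ and $T_2^{cop}$ are bijective with a left (respectively right) $\Delta^{cop}$-colinear inverse. The natural candidates, obtained from Remark 3.3 by replacing $S$ by $S^{-1}$ and $\Delta$ by $\Delta^{cop}$, are
$$(T_1^{cop})^{-1}(a\otimes b)=a_{(2)}\otimes S^{-1}(a_{(1)})b,\qquad (T_2^{cop})^{-1}(a\otimes b)=aS^{-1}(b_{(2)})\otimes b_{(1)};$$
these are well defined into $A\otimes A$ by the previous step, and the composition identities $T_i^{cop}\circ(T_i^{cop})^{-1}=\iota=(T_i^{cop})^{-1}\circ T_i^{cop}$ reduce, after applying $S$ to one tensor leg and using anti-multiplicativity, precisely to the antipode identities (\ref{2.1}) and (\ref{2.2}), which hold for $(A,\Delta)$ by Remark 3.3. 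The left (respectively right) $\Delta^{cop}$-colinearity of $(T_1^{cop})^{-1}$ (respectively $(T_2^{cop})^{-1}$) is then the same computation as in Proposition 2.1, again with $S$ replaced by $S^{-1}$ and $\Delta$ by $\Delta^{cop}$. Hence $(A,\Delta^{cop})$ is a generalized multiplier Hopf coquasigroup, i.e. $(A,\Delta)$ is regular; as a by-product the antipode of $(A,\Delta^{cop})$ produced by Definition 3.3 equals $S^{-1}$ and $\varepsilon^{cop}=\varepsilon$, in agreement with Proposition 3.1.

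I do not expect a single deep obstacle. The one genuinely non-routine point is the first step above --- that $\Delta^{cop}$ is an honest comultiplication --- since that is the only place where bijectivity of $S$ is really needed, and the computation of $(\iota\otimes S^{-1})T_1^{-1}$ and $(S^{-1}\otimes\iota)T_2^{-1}$ settles it. Everything else is careful ``covered/uncovered'' bookkeeping: one works throughout with expressions such as $(c\otimes 1)\Delta^{cop}(a)$, $\Delta^{cop}(a)(1\otimes b)$ and $T_2^{-1}(c\otimes a)(1\otimes b)$, which genuinely lie in $A\otimes A$, removing the covering element only at the end via surjectivity of $T_1$ and $T_2$ --- exactly as in Lemmas 3.1--3.3. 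It is also worth fixing at the outset that ``$S$ invertible'' is read as ``$S$ is a bijection of $A$ onto itself'' (so that $S(A)=A$), the reading consistent with Proposition 3.1.
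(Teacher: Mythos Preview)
Your proposal is correct, and the forward implication is exactly what the paper does: the theorem is stated immediately after Proposition~3.1 with the remark ``Following the above proposition, we give a characterization of regularity,'' so the paper simply cites $S^{cop}=S^{-1}$ from that proposition and gives no further argument.

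For the converse, the paper provides no details at all, so your write-up actually supplies what the paper leaves implicit. Your argument is the natural one (and parallels Van Daele's treatment in \cite{V94}): use bijectivity of $S$ and the formulas for $T_1^{-1},T_2^{-1}$ from Remark~3.3 to see that $(1\otimes x)\Delta(a)$ and $\Delta(a)(y\otimes 1)$ lie in $A\otimes A$, then build the candidate inverses $(T_i^{cop})^{-1}$ with $S^{-1}$ in place of $S$ and reduce the four composition identities to (\ref{2.1}) and (\ref{2.2}) by applying $S$ to one tensor leg. The colinearity check is the $\Delta^{cop}$-version of Proposition~2.1, exactly as you say. One small point worth making explicit in a final version: the well-definedness of, say, $(T_1^{cop})^{-1}(a\otimes b)=a_{(2)}\otimes S^{-1}(a_{(1)})b$ into $A\otimes A$ is most cleanly seen via $(\iota\otimes S)\bigl(a_{(2)}\otimes S^{-1}(a_{(1)})b\bigr)=\tau\bigl(T_2(S(b)\otimes a)\bigr)$, which uses only the original $T_2$ rather than the ``new'' maps from the first step; but this is bookkeeping, not a gap.
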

 
 Now we will prove  $S$ is anticomultiplicative, that is $\Delta S(a) = (S\otimes S)\Delta^{cop}(a)$  for all $a \in A$.
 
 \begin{lemma}
 Suppose that $a, b, a_{i}, b_{i} \in A$, then 
 \begin{enumerate}[nosep, label=(\arabic*)]
    \item $a\otimes S(b) = \sum \Delta( a_{i})(1\otimes b_{i}) $ \quad iff \quad $(1\otimes b)\Delta(a) = \sum a_{i} \otimes S^{-1} (b_{i})  $.
    \item $S(b)\otimes a = \sum (b_{i}\otimes 1)\Delta( a_{i}) $ \quad iff \quad $\Delta(a)(b\otimes 1) = \sum S^{-1} (b_{i}) \otimes a_{i}$.
    \item The following are equivalent:
    \begin{eqnarray*}
    &(i)& \Delta (a)(1\otimes b) =  \sum \Delta( a_{i})(b_{i}\otimes 1), \\
    &(ii)& a\otimes S^{-1}(b) = \sum (a_{i}\otimes 1)\Delta( b_{i}), \\ 
    &(iii)& (1\otimes a)\Delta (b) = \sum S(b_{i}) \otimes a_{i}.     
   \end{eqnarray*}
\end{enumerate}
 \end{lemma}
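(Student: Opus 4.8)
The plan is to recast each of the displayed equations as a statement about one of the Galois maps $T_1, T_2$ and its explicitly known inverse, and then to strip the antipode off a single tensor leg. The tools are: $S$ is an anti-automorphism of $A$ (Lemma 3.6 together with Proposition 3.2, so in particular $S(A)=A$); the inverse formulas $T_1^{-1}(x\otimes y)=x_{(1)}\otimes S(x_{(2)})y$ and $T_2^{-1}(x\otimes y)=xS(y_{(1)})\otimes y_{(2)}$ of Remark 3.1; the corresponding data for $(A,\Delta^{cop})$, whose antipode is $S^{cop}=S^{-1}$ and which exists by regularity; and the consequence of regularity that $(1\otimes b)\Delta(a)$, $\Delta(a)(b\otimes 1)$ and $(1\otimes a)\Delta(b)$ all lie in $A\otimes A$.

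For part (1), assume the left-hand identity. Since $S(b)\in A$ we have $a\otimes S(b)\in A\otimes A$, and the identity reads $a\otimes S(b)=T_1\big(\sum a_i\otimes b_i\big)$; applying $T_1^{-1}$ and then anti-multiplicativity of $S$ gives $\sum a_i\otimes b_i=a_{(1)}\otimes S(a_{(2)})S(b)=a_{(1)}\otimes S(ba_{(2)})=(\iota\otimes S)\big((1\otimes b)\Delta(a)\big)$, and applying $\iota\otimes S^{-1}$ produces $\sum a_i\otimes S^{-1}(b_i)=(1\otimes b)\Delta(a)$. The converse is obtained by running this backwards (apply $\iota\otimes S$, then $T_1$). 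Part (2) is the exact mirror image with $T_2$, $T_2^{-1}$ and $S^{-1}\otimes\iota$ in place of $T_1$, $T_1^{-1}$ and $\iota\otimes S^{-1}$: there one uses $T_2^{-1}(S(b)\otimes a)=S(b)S(a_{(1)})\otimes a_{(2)}=S(a_{(1)}b)\otimes a_{(2)}=(S\otimes\iota)\big(\Delta(a)(b\otimes 1)\big)$.

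For part (3) I would prove the cycle (i) $\Rightarrow$ (ii) $\Rightarrow$ (iii) $\Rightarrow$ (i). Using the bijective maps at hand, (i) determines $\sum a_i\otimes b_i$ as the explicit element $R^{-1}T_1(a\otimes b)$, where $R(x\otimes y)=\Delta(x)(y\otimes 1)$ is bijective by regularity; (ii) determines it as $T_2^{-1}(a\otimes S^{-1}(b))$; and (iii), after applying the flip $\sigma$, determines it as an explicit expression built from $(1\otimes a)\Delta(b)=\sigma\, T_2^{cop}(a\otimes b)$ and $S^{-1}$; so the content of the lemma is that these three expressions for $\sum a_i\otimes b_i$ coincide. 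The obstruction is that (i) contains $\Delta(a_i)$ with $a_i$ a dummy index, so one cannot simply substitute: one must first multiply each identity on an auxiliary leg by an element of $A$ so that only well-defined iterated coproducts appear, then cancel each antipode--coproduct pair strictly in one of the configurations provided by (\ref{2.1})--(\ref{2.2}) (equivalently $S(a_{(1)})a_{(2)}=\varepsilon(a)=a_{(1)}S(a_{(2)})$ and $S^{-1}(a_{(2)})a_{(1)}=\varepsilon(a)=a_{(2)}S^{-1}(a_{(1)})$), and finally remove the auxiliary element by non-degeneracy. This is the scheme of the corresponding argument in \cite{V94}; the main difficulty is precisely this covering bookkeeping, since coassociativity is unavailable and a generic re-association of $\Delta$ is not permitted — only the weak identities (\ref{2.1})--(\ref{2.2}) may be invoked, and they must be used in exactly the right slots.
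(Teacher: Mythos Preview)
The paper states this lemma without proof, so there is nothing to compare against directly; your task reduces to whether the argument is internally sound within the paper's framework.

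Parts (1) and (2) are handled correctly. Your computation $T_1^{-1}(a\otimes S(b))=a_{(1)}\otimes S(a_{(2)})S(b)=(\iota\otimes S)\big((1\otimes b)\Delta(a)\big)$ uses only the explicit inverse formula from Remark~3.1, anti\-multiplicativity of $S$ (Lemma~3.2), and bijectivity of $S$ (Proposition~3.2); the mirror argument for $T_2$ is equally clean.

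For part (3) your outline works for the equivalence (i)\,$\Leftrightarrow$\,(iii): writing $R(x\otimes y)=\Delta(x)(y\otimes 1)=\sigma\,T_1^{cop}(x\otimes y)$ and using the $\Delta^{cop}$ identity $a_{(2)(2)}\otimes S^{-1}(a_{(2)(1)})a_{(1)}=a\otimes 1$ (this is (2.2) for $(A,\Delta^{cop})$), one computes $R^{-1}T_1(a\otimes b)=ab_{(2)}\otimes S^{-1}(b_{(1)})$, which is exactly the element determined by (iii). The converse uses the companion identity $b_{(2)(1)}S^{-1}(b_{(1)})\otimes b_{(2)(2)}=1\otimes b$.

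The genuine gap is the link with (ii). Since $T_2$ is bijective, (ii) is equivalent to $\sum a_i\otimes b_i=T_2^{-1}(a\otimes S^{-1}(b))$, and equating this with the value $ab_{(2)}\otimes S^{-1}(b_{(1)})$ forced by (i)/(iii) amounts, after the substitution $b=S(b')$, to
\[
(S\otimes\iota)\Delta(b')=\sigma\,(\iota\otimes S^{-1})\,\Delta\big(S(b')\big),
\]
which is precisely $\Delta\circ S=(S\otimes S)\circ\Delta^{cop}$ --- the anticomultiplicativity of $S$ stated as the \emph{next} proposition. None of the eight weak identities coming from (2.1)--(2.2) and their $\Delta^{cop}$ versions supplies $\Delta(S^{\pm1}(c))$ on its own, so your covering-and-cancelling scheme cannot close the cycle through (ii) without effectively proving Proposition~3.3 along the way. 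You should either make that simultaneous derivation explicit, or restrict the claim for (3) to (i)\,$\Leftrightarrow$\,(iii) and defer (ii) until after anticomultiplicativity is available. Also, your parenthetical ``equivalently $S(a_{(1)})a_{(2)}=\varepsilon(a)$\,\ldots'' overstates things: those scalar identities follow from (2.1)--(2.2) by applying $\varepsilon$, but in the non-coassociative setting they are strictly weaker and do not suffice for the cancellation steps you need.
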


 \begin{proposition}
  $S$ is anticomultiplicative, that is $\Delta S(a) = (S\otimes S)\Delta^{cop}(a)$.
 \end{proposition}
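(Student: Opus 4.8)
The plan is to read off anticomultiplicativity of $S$ directly from the three‑way equivalence in Lemma 3.4(3), which was set up precisely for this purpose, combined with the formula $T_{2}^{-1}(c\otimes a)=cS(a_{(1)})\otimes a_{(2)}$ of Remark 3.3(1) and the fact (Theorem 3.2 together with Proposition 3.1) that in the regular setting $S$ is bijective with $S(A)\subseteq A$ and $S^{-1}(A)\subseteq A$.

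Fix $a,b\in A$. By surjectivity of $T_{2}$, write $b\otimes a=\sum_{i}(u_{i}\otimes 1)\Delta(v_{i})$ with $u_{i},v_{i}\in A$; equivalently $\sum_{i}u_{i}\otimes v_{i}=T_{2}^{-1}(b\otimes a)=bS(a_{(1)})\otimes a_{(2)}$. The identity $b\otimes a=\sum_{i}(u_{i}\otimes 1)\Delta(v_{i})$ is exactly statement (ii) of Lemma 3.4(3) with the variables $(a,b,a_{i},b_{i})$ of that lemma taken to be $(b,S(a),u_{i},v_{i})$ — legitimate since $S$ is bijective, so $S^{-1}(S(a))=a$. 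Hence statement (iii) of Lemma 3.4(3) yields
\[
(1\otimes b)\,\Delta(S(a))=\sum_{i}S(v_{i})\otimes u_{i}.
\]
Applying the linear map $\tau\circ(\iota\otimes S)$, with $\tau$ the flip on $A\otimes A$, to $\sum_{i}u_{i}\otimes v_{i}=bS(a_{(1)})\otimes a_{(2)}$ gives $\sum_{i}S(v_{i})\otimes u_{i}=S(a_{(2)})\otimes bS(a_{(1)})$, so $(1\otimes b)\Delta(S(a))=S(a_{(2)})\otimes bS(a_{(1)})$.

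On the other hand $S$ is an anti‑homomorphism, hence so is $S\otimes S$ on $A\otimes A$, and using $1\otimes b=(S\otimes S)(1\otimes S^{-1}(b))$ together with anti‑multiplicativity of $S\otimes S$ one computes
\[
(1\otimes b)(S\otimes S)\Delta^{cop}(a)=(S\otimes S)\big(\Delta^{cop}(a)(1\otimes S^{-1}(b))\big)=(S\otimes S)\big(a_{(2)}\otimes a_{(1)}S^{-1}(b)\big)=S(a_{(2)})\otimes bS(a_{(1)}),
\]
where $\Delta^{cop}(a)(1\otimes S^{-1}(b))\in A\otimes A$ because it equals $T_{1}^{cop}(a\otimes S^{-1}(b))$. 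Comparing the two computations, $(1\otimes b)\Delta(S(a))=(1\otimes b)(S\otimes S)\Delta^{cop}(a)$ for every $b\in A$; multiplying on the left by $c\otimes 1$ with $c\in A$ arbitrary and invoking non‑degeneracy of the product on $A\otimes A$ (extended to $M(A\otimes A)$) forces $\Delta(S(a))=(S\otimes S)\Delta^{cop}(a)$, which is the assertion.

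Essentially all of the conceptual content is already packed into Lemma 3.4; the only steps that require care here are the leg‑bookkeeping — in particular the order reversal when $S\otimes S$ is pushed through a product in $M(A\otimes A)$ — and checking that each intermediate term genuinely lies in $A\otimes A$, which is why $S(A)\subseteq A$ and $S^{-1}(A)\subseteq A$ must be invoked. I do not expect any genuine obstacle beyond this.
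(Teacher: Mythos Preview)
Your proof is correct and follows essentially the approach the paper intends: Lemma~3.4(3) is stated immediately before this proposition precisely so that the equivalence (ii)$\Leftrightarrow$(iii), fed with $T_{2}^{-1}(b\otimes a)=bS(a_{(1)})\otimes a_{(2)}$, yields $(1\otimes b)\Delta(S(a))=S(a_{(2)})\otimes bS(a_{(1)})$, and this is exactly Van Daele's argument in the multiplier Hopf algebra setting that the present paper is transporting. The bookkeeping with $S\otimes S$ as an anti-homomorphism on $M(A\otimes A)$ and the final non-degeneracy step are handled correctly.
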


 \begin{theorem}
 Let $(A, \Delta)$ be a generalized multiplier Hopf coquasigroup. $(A, \Delta)$ is regular if and only if $(A, \Delta)$ is a regular multiplier Hopf coquasigroup introduced in \cite{Y22}.
 \end{theorem}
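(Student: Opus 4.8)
The statement is an equivalence, and the plan is to establish each implication by collecting the structural facts proved above rather than by fresh computation. Throughout, note that the map $S$ appearing in the \cite{Y22}-definition and the map $S$ built in Definition 3.3 agree: applying $\varepsilon\otimes\iota$ to the identity $T_{1}^{-1}(a\otimes b)=a_{(1)}\otimes S(a_{(2)})b$ (established in the proof of Proposition 2.1) gives $\varepsilon(a_{(1)})S(a_{(2)})b=(\varepsilon\otimes\iota)T_{1}^{-1}(a\otimes b)$, which is exactly $(\ref{3.3})$. So there is no ambiguity in speaking of ``the antipode''.

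For the implication ``regular multiplier Hopf coquasigroup $\Rightarrow$ regular generalized multiplier Hopf coquasigroup'': by Proposition 2.1 the Galois maps $T_{1},T_{2}$ of such an $A$ are bijective and carry a left (resp. right) $A$-colinear inverse, so $(A,\Delta)$ is a generalized multiplier Hopf coquasigroup in the sense of Definition 3.2. Since, in the sense of \cite{Y22}, ``regular'' means precisely that the antipode is bijective, and that antipode is the map $S$ of Definition 3.3 by the identification above, Theorem 3.2 immediately yields that $(A,\Delta^{cop})$ is again a generalized multiplier Hopf coquasigroup, i.e. $(A,\Delta)$ is regular as a generalized multiplier Hopf coquasigroup.

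For the converse, let $(A,\Delta)$ be a regular generalized multiplier Hopf coquasigroup. Axioms (1) and (2) of the \cite{Y22}-definition are verbatim the requirements that $\Delta$ be a comultiplication admitting the counit $\varepsilon$ (Definition 3.1 and $(\ref{3.1})$). By Theorem 3.2 the map $S$ is invertible, and Proposition 3.1(2) gives $S(A)\subseteq A$, so that $S$ is a genuine linear endomorphism of $A$ — this is the one place where regularity is essential rather than cosmetic. Lemma 3.2 supplies antimultiplicativity of $S$, and Proposition 3.2 supplies anticomultiplicativity, $\Delta S(a)=(S\otimes S)\Delta^{cop}(a)$. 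Finally the antipode identities $(\ref{2.1})$ and $(\ref{2.2})$ are read off from Remark 3.3(2): from $T_{2}\circ T_{2}^{-1}=\iota=T_{2}^{-1}\circ T_{2}$ one obtains, for every $c\in A$, the equalities $(c\otimes 1)(m\otimes\iota)(S\otimes\Delta)\Delta(a)=c\otimes a=(c\otimes 1)(m\otimes\iota)(\iota\otimes S\otimes\iota)(\iota\otimes\Delta)\Delta(a)$ in the first tensor leg, whereupon non-degeneracy of the product of $A$ strips off $c$ and delivers $(\ref{2.1})$ with $1_{M(A)}$ in the first leg; the symmetric computation using $T_{1}\circ T_{1}^{-1}=\iota=T_{1}^{-1}\circ T_{1}$ gives $(\ref{2.2})$. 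Thus $(A,\Delta,\varepsilon,S)$ satisfies all of (1)--(3) of \cite{Y22} with bijective antipode, i.e. it is a regular multiplier Hopf coquasigroup.

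The two steps that carry actual content, as opposed to bookkeeping, are the identification of the abstractly constructed $S$ with the \cite{Y22}-antipode (needed so that the two notions of ``regular'' line up), and, in the converse direction, the passage from the ``smeared'' identities carrying an auxiliary element $c$ (or $b$) to the honest antipode axioms valued in $M(A)$; the latter rests on non-degeneracy of the product together with the inclusion $S(A)\subseteq A$ from Proposition 3.1, and this is where I expect the only real subtlety to lie. The rest is an assembly of Proposition 2.1, Lemma 3.2, Proposition 3.1, Theorem 3.2, Proposition 3.2 and Remark 3.3.
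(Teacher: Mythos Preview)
Your proposal is correct and is precisely what the paper intends: Theorem 3.3 is stated in the paper without proof because it is meant to be read off from the preceding results (Proposition 2.1, Lemma 3.2, Proposition 3.1, Theorem 3.2, Proposition 3.2 and Remark 3.3), and your write-up assembles exactly these in the right order. Your two flagged subtleties --- the identification of the constructed $S$ with the \cite{Y22}-antipode via $(\varepsilon\otimes\iota)T_{1}^{-1}$, and the use of non-degeneracy together with $S(A)\subseteq A$ from Proposition 3.1(2) to pass from the smeared identities in Remark 3.3(2) to the genuine equations $(\ref{2.1})$--$(\ref{2.2})$ in $M(A)$ --- are the only points requiring care, and you have handled both correctly.
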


 Following Proposition 2.1 and Theorem \thesection.3, we obtain the sufficient and necessary conditions for multiplier bialgebra $(A, \Delta)$ to be a regular multiplier Hopf coquasigroup. 
 And we could restate the definition of regular multiplier Hopf coquasigroup.

 \begin{definition}
  Let $(A, \Delta)$ be a a nondegenerate associative algebra equipped with a (not necessarily coassociative) comultiplication $\Delta: A\longrightarrow M(A\otimes A)$  and a $k$-linear map $\varepsilon: A\longrightarrow k$. 
  $(A, \Delta)$ is called a \emph{generalized multiplier Hopf coquasigroup}, if the following conditions hold.
  \begin{enumerate}[nosep, label=(\arabic*)]
    \item  $T_{1}(a\otimes b)=\Delta(a)(1\otimes b)$ and $T_{2}(a\otimes b)=(a\otimes 1)\Delta(b)$ belong to $A\otimes A$ for any $a, b\in A$.
    \item The counit satisfies $(\varepsilon\otimes id)T_{1}(a\otimes b) = ab = (id\otimes \varepsilon)T_{2}(a\otimes b)$.
    \item $T_{1}$ (respectively $T_{2}$)  has a left (respectively right) $A$-colinear inverse, i.e., for all $a, b, c \in A$,
    \begin{eqnarray*}
    && (\iota \otimes \varepsilon \otimes \iota) (\iota \otimes T_{1}^{-1}) (\Delta \otimes \iota)(a\otimes b) = T_{1}^{-1} (a\otimes b), \\
    && (\iota \otimes \varepsilon \otimes \iota) (T_{2}^{-1} \otimes \iota) (\iota \otimes \Delta)(c\otimes a) = T_{2}^{-1}(c\otimes a).      
  \end{eqnarray*}

  Generalized multiplier Hopf coquasigroup  $(A, \Delta)$ is called a \emph{regular multiplier Hopf coquasigroup}, if $(A, \Delta^{cop})$ is again a generalized multiplier Hopf coquasigroup.
\end{enumerate}
\end{definition}

 Let us consider the case of a $*$-algebra over field $\mathds{C}$. In this situation regulairy is automatic. And the cunit and the antipode keep the involution in the following sense,
 \begin{proposition}
 If $(A, \Delta)$ is a generalized multiplier Hopf $*$-coquasigroup, then
 
  (1) $\varepsilon$ is a $*$-homomorphism, $\varepsilon(a^*) = \varepsilon(a)^-$, where $\varepsilon(a)^-$ is the conjugate complex number of $\varepsilon(a)$.
  
  (2) $S\big(S(a)^* \big)^* = a$ for all $a\in A$. 
\end{proposition}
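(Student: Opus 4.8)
The plan is to mirror the classical Van Daele argument for multiplier Hopf $*$-algebras, adapting it to the non-coassociative setting, where the only subtlety is that we can no longer freely reassociate tensor legs. For part (1), I would start from the defining property $(\varepsilon\otimes\iota)T_1(a\otimes b)=ab$. Taking the $*$ of this identity, and using that $T_1(a\otimes b)=\Delta(a)(1\otimes b)$ while $\Delta$ is a $*$-homomorphism into $M(A\otimes A)$, one gets $(\bar\varepsilon\otimes\iota)\bigl((1\otimes b^*)\Delta(a^*)\bigr)=b^*a^*$, where $\bar\varepsilon(x):=\overline{\varepsilon(x^*)}$. Comparing this with the identity $(\varepsilon^{cop}\otimes\iota)\bigl((1\otimes a)\Delta(b)\bigr)=ab$ established inside the proof of Proposition 3.5(1), together with the bijectivity of the map $a\otimes b\mapsto(1\otimes a)\Delta(b)$ (which followed there from surjectivity of $T_2$ and the flip), I conclude $\bar\varepsilon=\varepsilon^{cop}=\varepsilon$; that is, $\overline{\varepsilon(a^*)}=\varepsilon(a)$, equivalently $\varepsilon(a^*)=\overline{\varepsilon(a)}$. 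One should also record that $\varepsilon$ is an algebra homomorphism (Remark 3.2(3)), so "$*$-homomorphism" is then immediate.

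For part (2), define $R:A\to M(A)$ by $R(a):=S(a^*)^*$ and check that $R=S^{-1}$; then $S(S(a)^*)^*=R(S(a))=S^{-1}(S(a))=a$, using that $S$ is invertible (Theorem 3.3, since a $*$-coquasigroup is regular). To identify $R$ with $S^{-1}$ I would take the $*$ of the formula for $T_1^{-1}$ from Remark 3.2(1), namely $T_1^{-1}(a\otimes b)=a_{(1)}\otimes S(a_{(2)})b$, i.e. $T_1\bigl(a_{(1)}\otimes S(a_{(2)})b\bigr)=a\otimes b$, equivalently $a_{(1)(1)}\otimes a_{(1)(2)}S(a_{(2)})b=a\otimes b$ (this is equation (2.2) in disguise, reproved in Remark 3.2(2)). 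Applying $*$ to the leg-wise identity $a_{(1)}\otimes a_{(2)}S(a_{(3)}') \cdots$ is awkward directly, so instead I would work with the cleaner defining relations $(m\otimes\iota)(S\otimes\Delta)\Delta(a)=1\otimes a$ and its partners from (2.1)–(2.2), apply the involution on $A\otimes A$ (which swaps nothing but conjugates and stars each leg), use $\Delta(a^*)=\Delta(a)^*$ and antimultiplicativity of $S$, and read off that $R$ satisfies the same antipode-type equations with $\Delta^{cop}$ in place of $\Delta$ — i.e. $R$ is the antipode of $(A,\Delta^{cop})$. By the uniqueness built into Definition 3.4 and Proposition 3.5(2), the antipode of $(A,\Delta^{cop})$ is $S^{cop}=S^{-1}$, so $R=S^{-1}$, as desired.

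The main obstacle I anticipate is purely bookkeeping: because $\Delta$ is not coassociative, when I star an identity such as $(m\otimes\iota)(S\otimes\Delta)\Delta(a)=1\otimes a$ I must be careful that the three tensor legs are arranged exactly as one of the four prescribed identities (2.1)–(2.2) for $\Delta^{cop}$, and not some reassociated variant that is not available to me. So the real work is to check that $\ast$ applied to $(m\otimes\iota)(S\otimes\Delta)\Delta(a)=1\otimes a$ produces, after using the flip to pass from $\Delta^*$-legs to $\Delta^{cop}$-legs, precisely $(\iota\otimes m)(\Delta^{cop}\otimes R)\Delta^{cop}(a^*)=a^*\otimes 1$ or its $\iota\otimes S\otimes\iota$ companion — not an expression requiring an illegitimate move of brackets. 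Once the four equations are matched up correctly, the identification $R=S^{cop}=S^{-1}$ and hence $S(S(a)^*)^*=a$ is a formality. A secondary, minor point is the multiplier-algebra hygiene: $S(a)$ a priori lies in $M(A)$, so "$S(a)^*$" must be interpreted via the canonical extension of the involution to $M(A)$, and one checks this extension is compatible with the extension of $S$; this is standard and I would dispatch it in a sentence.
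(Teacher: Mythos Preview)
The paper does not actually supply a proof of this proposition; it merely states the result after remarking that regularity is automatic in the $*$-case. Your proposal therefore cannot be compared line by line against the paper, but it is precisely the argument the paper is tacitly importing from Van Daele's original treatment (\cite{V94}, Propositions~5.3--5.4), specialised to the present non-coassociative framework.

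Your argument for part~(1) is correct as written. For part~(2) your strategy---set $R(a)=S(a^*)^*$, show that $R$ is the antipode of $(A,\Delta^{cop})$, and then invoke Proposition~3.5(2) to get $R=S^{cop}=S^{-1}$---is the right one, and your worry about the bracket placement is legitimate but harmless. Concretely, starring the first identity in (\ref{2.1}) and using $\Delta(a)^*=\Delta(a^*)$ together with $S(x)^*=R(x^*)$ gives
\[
b_{(2)(1)}R(b_{(1)})\otimes b_{(2)(2)}=1\otimes b,
\]
which after applying the flip is exactly the first identity in (\ref{2.2}) for $(A,\Delta^{cop})$ with $R$ in the role of $S^{cop}$; the remaining three identities match up in the same way. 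So no illegitimate reassociation is ever needed. The only point to make explicit is why these four identities pin down the antipode of $(A,\Delta^{cop})$ uniquely: the cleanest way is not to argue uniqueness abstractly but to star the formula $T_1^{-1}(a\otimes b)=a_{(1)}\otimes S(a_{(2)})b$ from Remark~3.2(1) directly, obtaining the inverse of the map $c\otimes d\mapsto(1\otimes d)\Delta(c)$ in terms of $R$, and then read off $R=S^{cop}$ from Definition~3.3 applied to $(A,\Delta^{cop})$. Your remark about extending the involution to $M(A)$ is correct and indeed routine.
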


\section{Coactions of regular multiplier Hopf coquasigroups}
\def\theequation{\thesection.\arabic{equation}}
\setcounter{equation}{0}

 Bases on the complete modules, we first introduce the coaction of regular multiplier Hopf coquasigroups, and Yetter-Drinfeld quasicomodules,
 which generalizes Yetter-Drinfeld quasicomodules of Hopf coquasigroups \cite{AFGS15} and multiplier Hopf algebra \cite{YW11}. 
 \\
 
 Let $(A, \Delta)$ be a multiplier Hopf coquasigroup, then $A$ has local units in the sense of Proposition 4.5 in \cite{Y22}. 
 Let $V$ be a vector space and $X=V\otimes A$. 
 Consider a left and a right action of $A$ on $X$ by for all $a, a'\in A$ and $v\in V$,
 \begin{eqnarray*}
 a\cdot (v\otimes a')= v\otimes aa', \qquad
 (v\otimes a')\cdot a= v\otimes a'a.
 \end{eqnarray*}
 The completed module we get here is denoted as $M_{0}(V\otimes A)$.

 \begin{definition}
 Let $A$ be a regular multiplier Hopf coquasigroup and $V$ a vector space.
  A left quasi-coaction of $A$ on $V$ is an injective linear map $\Gamma_{l}: V \rightarrow M_{0}(A\otimes V), v \mapsto v_{(-1)} \otimes v_{(0)} $ satisfying
  for all $a\in A$,
  \begin{eqnarray}
    && (\varepsilon \otimes \iota) \big((a\otimes 1)\Gamma_{l}(v) \big) = \varepsilon(a)v,  \\
    && a v_{(-1)}S(v_{(0)(-1)}) \otimes v_{(0)(0)} = a \otimes v = a S(v_{(-1)})v_{(0)(-1)} \otimes v_{(0)(0)}.      
  \end{eqnarray}  
 Here we use the adapted version of the Sweedler notation for this quasi-coaction, and write $(a \otimes 1)\Gamma_{l}(v) = a v_{(-1)}\otimes v_{(0)}$ for all $a\in A$ and $v\in V$.
 Sometimes $V$ is called a left $A$-quasicomodule.

 Given two left $A$-quasicomodules $(V, \Gamma_{l})$ and $(V, \Gamma_{l}')$, a linear map $f: V\longrightarrow V'$ is a morphism of left $A$-quasicomodules 
 if $\Gamma_{l}'\circ f = (\iota\otimes f)\circ \Gamma_{l}$.
 \end{definition}

 Similarly, we have a right  $A$-quasicomodule with the following conditions:
  for all $a\in A$,
  \begin{eqnarray}
    && (\iota \otimes \varepsilon) \big(\Gamma_{r}(v)(1\otimes a) \big) = \varepsilon(a)v,  \\
    && v_{(0)(0)} \otimes v_{(0)(1)}S(v_{(1)})a = v \otimes a = v_{(0)(0)}\otimes S(v_{(0)(1)})v_{(1)}a.      
  \end{eqnarray}  
  We will denote by ${}^{A}\mathcal{QM}$ the category of left $A$-quasicomodules.
  Moreover if $V$ is a usual left $A$-comodule, then $V$ is a left $A$-quasicomodule.
  Obviously, all left $A$-comodules with the obvious morphisms form a subcategory of ${}^{A}\mathcal{QM}$,
  this subcategory will be denoted by ${}^{A}\mathcal{M}$.

  In the same manner, we define the categories  ${}^{A}\mathcal{QM}^{A}$ and ${}^{A}\mathcal{M}^{A}$, 
  which satisfy the same compatible condition $a v_{(-1)} \otimes v_{(0)(0)} \otimes v_{(0)(1)}b = a v_{(0)(-1)} \otimes v_{(0)(0)} \otimes v_{(1)} b$ for $v\in V$ and any $a,b \in A$.
 \\
 
 For  multiplier Hopf coquasigroup $A$, the notions of left $A$-modules is exactly the same as for multiplier Hopf algebras \cite{DVZ}, since it only depends on the algebra structure of $A$.
 That is, $(M, \cdot)$ (or $M$ in shot) is a left $A$-module if $M$ is a vector space and $\cdot: A\otimes M \longrightarrow M (a\otimes m \mapsto a\cdot m)$ is a linear map (called the action)
 satisfying the module conditions: the action is unital, i.e., $A\cdot M = M$, and $b\cdot (a\cdot m) = (ba)\cdot m$ for all $a, b\in A$ and $m\in M$. 

 We shall denote by  ${}_{A}\mathcal{M}$ the category of left $A$-modules. 
 Similarly we denite by  $\mathcal{M}_{A}$ the category of all right $A$-modules and ${}_{A}\mathcal{M}_{A}$ the category of $A$-$A$-bimodules.
 \\
 
 In the following, we will introduce Yetter-Drinfeld quasimodules over a regular multiplier Hopf coquasigroup as in \cite{GW20}. 
 
 \begin{definition}
 Let $A$ be a regular multiplier Hopf coquasigroup and $V$ a vector space.
 $V$ is called a left-left Yetter-Drinfeld quasimodule over $A$, if $V$ is a unital left $A$-module with the action $\cdot$, and a left $A$-quasicomodule with the coaction $\Gamma$ satisfy the following equations: 
 for any $a\in A$ and $v\in V$
  \begin{eqnarray}
    && b \big(a_{(1)} \cdot v\big)_{(-1)} a_{(2)} \otimes \big(a_{(1)} \cdot v\big)_{(0)}  = b a_{(1)}v_{(-1)} \otimes a_{(2)} \cdot v_{(0)} \quad \mbox{for all} \quad b\in A, \label{4.5} \\
    && a_{(1)}\cdot v \otimes a_{(2)(1)} \otimes a_{(2)(2)} b = a_{(1)(1)}\cdot v \otimes a_{(1)(2)} \otimes a_{(2)}b \quad \mbox{for all} \quad b\in A, \label{4.6} \\
    && ba_{(1)} \otimes a_{(2)(1)}\cdot v \otimes a_{(2)(2)}c = ba_{(1)(1)} \otimes a_{(1)(2)}\cdot v \otimes a_{(2)}c \quad \mbox{for all} \quad b, c\in A.   
  \end{eqnarray}  
  \end{definition}

 It is not hard to check that the equation (\ref{4.5}) is equivalent to
 \begin{eqnarray}
    && b \big(a \cdot v\big)_{(-1)}  \otimes \big(a \cdot v\big)_{(0)}  = b a_{(1)(1)}v_{(-1)} S(a_{(2)}) \otimes a_{(1)(2)} \cdot v_{(0)} \quad \mbox{for all} \quad b\in A.   
  \end{eqnarray}

 Given two left-left Yetter-Drinfeld quasimodules $V$ and $V'$ over $A$, a linear map $f: V\longrightarrow V'$ is a morphism of left-left Yetter-Drinfeld quasimodules, 
 if $f$ is a morphism of left $A$-modules and left $A$-quasicomodules. We shall denote ${}^{A}_{A}\mathcal{YDQ}$ the category of left-left Yetter-Drinfeld quasimodules over $A$.
 
 Moreover if $V$ is a usual left $A$-comodule, then we say $V$ is a left-left Yetter-Drinfeld module over $A$.
 Obviously, all left-left Yetter-Drinfeld modules with the obvious morphisms form a subcategory of ${}^{A}_{A}\mathcal{YDQ}$,
  this subcategory will be denoted by ${}^{A}_{A}\mathcal{YD}$.

 By paring left or right module structures with left or right quasicomodule structures, there are three other kinds of Yetter-Drinfeld module categories as follows.

 \begin{enumerate}[nosep, label=(\arabic*)]
    \item  The right-left Yetter-Drinfeld quasimodule category ${}^{A}\mathcal{YDQ}_{A}$.    
    $(V, \cdot, \Gamma)\in {}^{A}\mathcal{YDQ}_{A}$ if 
    
     $\bullet$ $(V, \cdot)$ is a right unital $A$-module;
     
     $\bullet$ $(V, \Gamma)$ is a left $A$-quasicomodule;
     
     $\bullet$ the module and quasimodule structures satisfy the following compatible conditions: for any $a, b, c\in A$ and $v\in V$,
     \begin{eqnarray}
    && b a_{(2)} \big( v \cdot a_{(1)}\big)_{(-1)} \otimes \big( v \cdot a_{(1)}\big)_{(0)}   = b v_{(-1)}a_{(1)} \otimes v_{(0)}\cdot a_{(2)},  \\
    && v\cdot a_{(1)} \otimes a_{(2)(1)} \otimes a_{(2)(2)} b = v\cdot a_{(1)(1)} \otimes a_{(1)(2)} \otimes a_{(2)}b, \\
    && ba_{(1)} \otimes v\cdot a_{(2)(1)} \otimes a_{(2)(2)}c = ba_{(1)(1)} \otimes v\cdot a_{(1)(2)} \otimes a_{(2)}c.   
  \end{eqnarray}  
  
    \item  The left-right Yetter-Drinfeld quasimodule category ${}_{A}\mathcal{YDQ}^{A}$.    
    $(V, \cdot, \Gamma)\in {}^{A}\mathcal{YDQ}_{A}$ if 
    
     $\bullet$ $(V, \cdot)$ is a left unital $A$-module;
     
     $\bullet$ $(V, \Gamma)$ is a right $A$-quasicomodule;
     
     $\bullet$ the module and quasimodule structures satisfy the following compatible conditions: for any $a, b, c\in A$ and $v\in V$,
     \begin{eqnarray}
    && \big(a_{(2)} \cdot v\big)_{(0)} \otimes \big(a_{(2)} \cdot v\big)_{(1)} a_{(1)} b  =a_{(1)}\cdot v_{(0)} \otimes a_{(2)} v_{(1)} b,  \\
    && b a_{(1)} \otimes a_{(2)(1)} \otimes a_{(2)(2)}\cdot v = b a_{(1)(1)} \otimes a_{(1)(2)} \otimes a_{(2)}\cdot v, \\
    && b a_{(1)} \otimes a_{(2)(1)}\cdot v \otimes a_{(2)(2)}c = b a_{(1)(1)} \otimes a_{(1)(2)}\cdot v \otimes a_{(2)}c.   
  \end{eqnarray}  
  
    \item  The right-right Yetter-Drinfeld quasimodule category $\mathcal{YDQ}^{A}_{A}$.    
    $(V, \cdot, \Gamma)\in {}^{A}\mathcal{YDQ}_{A}$ if 
    
     $\bullet$ $(V, \cdot)$ is a right unital $A$-module;
     
     $\bullet$ $(V, \Gamma)$ is a right $A$-quasicomodule;
     
     $\bullet$ the module and quasimodule structures satisfy the following compatible conditions: for any $a, b, c\in A$ and $v\in V$,
     \begin{eqnarray}
    && \big(v \cdot a_{(2)}\big)_{(0)} \otimes b a_{(1)} \big(v \cdot a_{(2)}\big)_{(1)}  = v_{(0)}\cdot a_{(1)} \otimes b v_{(1)} a_{(2)},  \\
    && b a_{(1)} \otimes a_{(2)(1)} \otimes v \cdot a_{(2)(2)} = b a_{(1)(1)} \otimes a_{(1)(2)} \otimes v\cdot a_{(2)}, \\
    && b a_{(1)} \otimes v\cdot a_{(2)(1)} \otimes a_{(2)(2)}c = b a_{(1)(1)} \otimes v \cdot a_{(1)(2)} \otimes a_{(2)}c.   
  \end{eqnarray}  
\end{enumerate}

 \begin{theorem}
 Let $(A, \Delta)$ be a regular multiplier Hopf coquasigroup, then 
 $${}^{A}_{A}\mathcal{YDQ} \simeq {}_{A} \mathcal{YDQ}^{A} \simeq {}^{A}\mathcal{YDQ}_{A} \simeq \mathcal{YDQ}^{A}_{A}.$$
 \end{theorem}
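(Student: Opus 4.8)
The plan is to realise the four categories as images of one another under explicit functors built from the antipode. Since $(A,\Delta)$ is regular, the antipode constructed in Section~3 is a bijection $S\colon A\to A$ with $S^{-1}=S^{cop}$ (Proposition~3.1), and $S$ is both antimultiplicative (Lemma~3.2) and anticomultiplicative (Proposition~3.2); these are the only structural inputs needed. Two elementary operations do the work. The \emph{module flip} turns a unital left action $\cdot$ on $V$ into the unital right action $v\triangleleft a:=S(a)\cdot v$ --- unitality follows from $A\cdot V=V$ and surjectivity of $S$, and $(v\triangleleft a)\triangleleft b=S(b)S(a)\cdot v=S(ab)\cdot v$ by antimultiplicativity --- with inverse operation $a\cdot v:=v\triangleleft S^{-1}(a)$. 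The \emph{comodule flip} turns a left quasi-coaction $\Gamma_l(v)=v_{(-1)}\otimes v_{(0)}$ into the right quasi-coaction $\Gamma_r(v):=v_{(0)}\otimes S^{-1}(v_{(-1)})$, read on covered elements; here anticomultiplicativity of $S$ is used to check that $\Gamma_r$ takes values in $M_0(V\otimes A)$, is injective, and satisfies the two right quasi-coaction axioms, the inverse operation using $S$ in place of $S^{-1}$. These two operations act on disjoint pieces of the data and hence commute, so every composition of them is well defined and yields, on objects, a chain of bijective correspondences
\[
{}^{A}_{A}\mathcal{YDQ}\;\longleftrightarrow\;{}^{A}\mathcal{YDQ}_{A}\;\longleftrightarrow\;\mathcal{YDQ}^{A}_{A}\;\longleftrightarrow\;{}_{A}\mathcal{YDQ}^{A},
\]
the functors being the identity on morphisms.

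The substantive part is checking that each object-level assignment actually lands in the target category, i.e.\ that the Yetter--Drinfeld compatibility conditions of Definition~4.2 are transported into the corresponding conditions of the right-left, left-right and right-right definitions. I would do this one functor at a time. For the module flip, starting from $(V,\cdot,\Gamma)\in{}^{A}_{A}\mathcal{YDQ}$ and the right action $v\triangleleft a=S(a)\cdot v$, substitute into the three conditions defining ${}^{A}\mathcal{YDQ}_{A}$; replacing $a$ by $S^{-1}(a)$ and using $\Delta^{cop}S(c)=(S\otimes S)\Delta(c)$, i.e.\ $S(c)_{(1)}\otimes S(c)_{(2)}=S(c_{(2)})\otimes S(c_{(1)})$, together with antimultiplicativity, the first condition becomes exactly the reformulation of (\ref{4.5}) displayed just after Definition~4.2 (the one written with $S(a_{(2)})$), and the other two become the coassociativity-type conditions (\ref{4.6}) and its companion. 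For the comodule flip, the analogous substitution with $\Gamma_r(v)=v_{(0)}\otimes S^{-1}(v_{(-1)})$ turns the compatibility conditions of ${}^{A}_{A}\mathcal{YDQ}$ into those of ${}_{A}\mathcal{YDQ}^{A}$, this time applying anticomultiplicativity on the comodule leg. The combined flip handles $\mathcal{YDQ}^{A}_{A}$, and the verifications are symmetric. That the functors are mutually inverse is then immediate from $SS^{-1}=S^{-1}S=\iota$ and $S^{cop}=S^{-1}$, since they are the identity on morphisms and the object-level operations were built in inverse pairs.

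Throughout, every identity is to be read on covered elements, exactly as in the conventions of this section and in \cite{YW11}: $\Gamma$ takes values in the completed module $M_0(A\otimes V)$, so each manipulation is performed after multiplying by enough elements of $A$ to land in $A\otimes V$ (or $A\otimes V\otimes A$), and the local units of $A$ from Proposition~4.5 of \cite{Y22} ensure these covered expressions determine the maps uniquely. I expect the main obstacle to be the coassociativity-type compatibility conditions --- those involving the extra elements $b,c$ --- because $\Delta$ is not coassociative, so they cannot be deduced from the remaining conditions and must be matched up directly, tracking with care where $S$ interchanges the two legs of $\Delta$. A secondary point requiring attention is the well-definedness of the comodule flip: verifying that $\Gamma_r$ built from $S^{-1}$ is injective and genuinely valued in $M_0(V\otimes A)$ is precisely where bijectivity of $S$, hence regularity, is used in an essential way, and where the right quasi-coaction axioms must be checked rather than assumed.
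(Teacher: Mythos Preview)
The paper does not supply a proof of this theorem: after the statement the text moves directly to the Acknowledgements, so there is nothing against which to compare your argument line by line.

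That said, your plan is exactly the standard one, and it is the argument the paper implicitly points to by citing \cite{YW11} for the multiplier Hopf algebra situation and \cite{AFGS15,GW20} for the (co)quasigroup situation. Building the four functors from the module flip $v\triangleleft a=S(a)\cdot v$ and the comodule flip $\Gamma_r(v)=v_{(0)}\otimes S^{-1}(v_{(-1)})$, with inverses obtained by swapping $S$ and $S^{-1}$, is the natural construction, and your identification of the two delicate points is accurate: (i) the extra ``partial coassociativity'' compatibility conditions (4.6), (4.7) and their right-hand analogues really must be matched by hand, because $\Delta$ is not coassociative, and the matching goes through only after substituting $a\mapsto S^{\pm 1}(a)$ and using anticomultiplicativity to reverse the order of the legs; (ii) the comodule flip genuinely needs bijectivity of $S$, i.e.\ regularity, both for well-definedness in $M_0(V\otimes A)$ and for injectivity. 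One small caution: when you transport conditions (4.6) and (4.7) under the module flip, the substitution $a\mapsto S^{-1}(a)$ interchanges the roles of the two conditions (the ``first leg acts'' condition on one side corresponds to the ``middle leg acts'' condition on the other), so you should expect the pair (4.6)--(4.7) to be exchanged rather than preserved individually; keeping track of this swap is the only place I would expect the computation to go astray.
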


\section*{Acknowledgements}

The work was partially supported by the China Postdoctoral Science Foundation (No. 2019M651764)
and National Natural Science Foundation of China (No. 11601231).


\vskip 0.6cm

\end {document}